\newcommand{\mc}{\mathcal}
\newcommand{\mf}{\mathfrak}
\newcommand{\eps}{\varepsilon}
\newcommand{\epsn}{{\varepsilon_n}}
\renewcommand{\d}{\,\mathrm{d}}
\DeclareMathOperator*{\argmin}{argmin}
\newcommand\R{\mathbb R}
\newcommand\N{\mathbb N}
\renewcommand{\to}{\rightarrow}
\numberwithin{equation}{section}
\newtheorem{thm}{Theorem}[section]
\newtheorem{defn}[thm]{Definition}
\newtheorem{prop}[thm]{Proposition}
\newtheorem{lemma}[thm]{Lemma}
\newtheorem{cor}[thm]{Corollary}
\theoremstyle{definition}
\newtheorem{rmk}[thm]{Remark}
\newtheorem{ex}[thm]{Example}
\theoremstyle{remark}
\title[From cohesive to brittle quasistatic debonding]{From cohesive to brittle debonding: the quasistatic framework}
\author[F. Riva]{Filippo Riva}
\address[F. Riva]{Charles University, Faculty of Mathematics and Physics, Department of Mathematical Analysis, Sokolovsk\'a 49/83, 186 75 Prague 8, Czech Republic}
\email{filippo.riva@matfyz.cuni.cz}
\begin{document}

	\begin{abstract}		
		The approximation of brittle laws via steeper and steeper cohesive profiles is validated within the mechanical setting of debonding models, which describe the detachment process of a peeled elastic adhesive membrane. In a quasistatic framework, energetic solutions to a suitably rescaled cohesive debonding problem, formulated in terms of displacements, are proved to converge to a limit evolution of shapes solving its brittle counterpart. The proposed approach relies on an equivalent and recently introduced free-boundary reformulation of this latter model.
	\end{abstract}
	
	\maketitle
	
	{\small
		\keywords{\noindent {\bf Keywords:} brittle debonding, cohesive debonding, quasistatic evolutions, energetic solutions.
		}
		\par
		\subjclass{\noindent {\bf 2020 MSC:}
			49J45,	%Methods involving semicontinuity and convergence; relaxation
			70G75,	%Variational methods for problems in mechanics
			74A45.	%Theories of fracture and damage		
		}
	}
	
	\pagenumbering{arabic}
	
	\medskip
	
	\tableofcontents
	
	\section*{Introduction}	
	In the last decades, the variational modelling of failure mechanisms in elastic materials has generated an increasing interest within the mathematical community, initiated by \cite{FrancMar} in the framework of fracture mechanics, and directly extended to delamination or debonding models. In literature, such degradation phenomena are usually classified in the two cathegories of brittle and cohesive. The former case, whose energetic interpretation has been pioneered by Griffith \cite{Griffith}, describes an abrupt process occurring instantaneously and which thus divides the object into a completely sound and a completely collapsed region. On the contrary, as envisaged by Barenblatt \cite{Barenblatt} (see also \cite{Dugdale}), the latter case illustrates a more gradual behaviour somehow introducing mixed zones in which the material is just partially broken.
	
	This indicates that cohesive laws could be interpreted as a regularized version of brittle ones. Indeed, at a formal level, the dissipation density of a brittle model is described by the characteristic function $\chi_{\{\lambda>0\}}$ of the positivity set of a certain nonnegative scalar quantity $\lambda$ (hence, the sets $\{\lambda=0\}$ and $\{\lambda>0\}$ represent the sound and the collapsed regions, respectively), while cohesive densities are modelled by a nondecreasing continuous function $\Phi(\lambda)$ satisfying $\Phi(0)=0$ and $\lim\limits_{\lambda\to +\infty}\Phi(\lambda)=1$ (so, the set $\{0<\Phi(\lambda)<1\}$ represents the partially broken part of the material). Since clearly $\chi_{\{\lambda>0\}}=\lim\limits_{\eps\to 0}\Phi(\frac \lambda\eps)$, one naturally expects that cohesive models approximate brittle ones when the cohesive profile becomes steeper and steeper. Although this can be easily made rigorous in a static scenario, where the theory of $\Gamma$-convergence \cite{DMbook} directly applies due to the monotonicity of the map $\eps\mapsto\Phi(\frac\lambda\eps)$, the problem becomes way more challenging in the evolutive framework due to the emergence of an additional crucial feature: the irreversibility of failure progression. In this setting, we are in fact not aware of any result in this direction.
	
	In the current paper we propose to verify such approximation in the context of debonding models, focusing on the quasistatic regime. This means that inertial effects are neglected and the process evolves through states of equilibrium \cite{MielkRoubbook}.
	
	Debonding models describe the evolution of an adhesive elastic membrane glued to a planar rigid substrate and peeled away due to the application of an external prescribed displacement. If the adhesion law between the membrane and the substrate is brittle, the process is usually formulated in terms of an evolution of sets $t\mapsto A(t)$, representing the debonded region at time $t$. In the quasistatic framework, the governing rules of the phenomenon are given by a global stability condition and an energy-dissipation balance (energetic solutions):
	\begin{equation}\label{eq:ensol}
		\begin{cases}
			\displaystyle\mathcal E(t,A(t))\le \mathcal E(t,B)+\int_{B\setminus A(t)}\kappa\,dx,\qquad\text{for all }B\supseteq A(t),\\
			\displaystyle\mathcal E(t,A(t))+\int_{A(t)\setminus A(0)}\kappa\,dx=\mathcal E(0,A(0))+\int_0^t P(\tau)\,d\tau.
		\end{cases}
	\end{equation}
Above, $\mc E(t,A)$ denotes the elastic energy of the membrane which at time $t$ is debonded on $A$ (see \eqref{eq:E}), while the integral term $\int_{B\setminus A}\kappa\,dx$ accounts for the dissipation of energy occurring by debonding the membrane from a certain configuration $A$ to a larger one $B$. Indeed, the function $\kappa$ models the toughness of adhesion. The integrand $P$ represents the power of the prescribed displacement, see Definition~\ref{defi:SES} for its expression. Irreversibility of the debonding process is finally encoded in the solution directly asking that $t\mapsto A(t)$ has to be nondecreasing with respect to inclusion.

Such formulation of quasistatic brittle debonding has been first studied in \cite{BucButtLux}, where existence of a weaker notion of solution in terms of capacitary measures (rather than sets) have been proved, and then in \cite{MagRivTol}, where the authors showed existence of actual shape solutions to \eqref{eq:ensol} by resorting to an equivalent free-boundary reformulation which we will exploit also in this paper. See also \cite{RivQuas} for the simpler one-dimensional case.

In the cohesive setting the process is instead depicted in terms of displacements: the elastic energy and the dissipation (pseudo-)potential now read as $\frac 12 \int_\Omega|\nabla u|^2\d x$ and $\int_\Omega\Phi(x,u,\gamma)\d x$, respectively. Here, $\Omega\subseteq \R^d$ denotes the reference configuration of the membrane, while the cohesive density $(x,y,z)\mapsto\Phi(x,y,z)$, which somehow represents a regularized toughness, is nondecreasing in $y$ and $z$, and satisfies $\Phi(x,0,0)=0$ and $\lim\limits_{(y,z)\to \infty}\Phi(x,y,z)=\kappa(x)$. Solving the quasistatic cohesive debonding model means finding a map $t\mapsto (u(t),\gamma(t))$, where $\gamma$ is nondecreasing, fulfilling $\gamma(t)\ge u(t)$ and
\begin{equation}\label{eq:ensolcoh}
	\begin{cases}
		\displaystyle	\frac 12 \int_\Omega\!\!|\nabla u(t)|^2\d x+\!\int_\Omega\!\!\Phi(x,u(t),\gamma(t))\d x\le \frac 12 \int_\Omega|\nabla v|^2\d x+\!\int_\Omega\!\!\Phi(x,v,\gamma(t))\d x,\text{ for all suitable }v,\\
		\displaystyle\frac 12 \int_\Omega|\nabla u(t)|^2\d x+\int_\Omega\Phi(x,u(t),\gamma(t))\d x=\text{initial energy}+\int_0^t P(\tau)\,d\tau.
	\end{cases}
\end{equation}

The main difference with respect to the brittle scenario is given by the additional variable $\gamma$, which is crucial to preserve irreversibility; usually, $\gamma(t)$ describes the maximal opening occuring till time $t$, see \eqref{eq:gamma}. We point out that irreversibility cannot be asked directly on the displacement $u$, since it is not the movement of the membrane which is monotone, but the progression of degradation in the adhesion law. Thus, the additional variable $\gamma$ is necessary for a proper description of cohesive models. We also refer to \cite{CagnToad,DMZan,NegSca} for more details.

We are now in a position to describe the result of the paper. We consider the rescaled cohesive density $\Phi^\eps(x,y,z):=\Phi\left(x,\frac y\eps,\frac z\eps\right)$ and we pick any energetic solution $(u^\eps,\gamma^\eps)$ of the rescaled cohesive formulation \eqref{eq:ensolcoh}. As $\eps\to 0$ we prove that the sequence $u^\eps(t)$ converges to a limit displacement $u(t)$, whose \lq\lq irreversible\rq\rq positivity set
\begin{equation}\label{eq:Atilde}
	\widetilde A(t):=\bigcup_{s\in [0,t]}\{u(s)>0\}
\end{equation}
defines a solution to the brittle debonding model \eqref{eq:ensol}. We also mention that this cohesive-to-brittle approximation of debonding models can be seen as the evolutive counterpart of the singular perturbation analysis of the one-phase Bernoulli free-boundary problem, in the spirit of \cite[Chapter 1]{CaffSalsabook}.

In order to properly understand the meaning of this result and the difficulties arising in its proof, some considerations are due. First, the formal computation
\begin{equation*}
	\lim\limits_{\eps\to 0}\left(\frac 12\int_\Omega|\nabla u|^2\d x+\int_\Omega \Phi\left(x,\frac u\eps,\frac u\eps\right) \d x\right)=\underbrace{\frac 12\int_{\{u>0\}}|\nabla u|^2\d x}_{\approx\mc E(\{u>0\})}+ \int_{\{u>0\}}\kappa(x)\d x
\end{equation*} 
suggests that expression \eqref{eq:Atilde} is the natural candidate, encompassing irreversibility, for a quasistatic brittle solution. This can be actually made rigorous employing the equivalent reformulation of the brittle model in terms of displacements developed in \cite{MagRivTol}, which we will heavily exploit in our argument, see Proposition~\ref{prop:equiv}.

Second, in order to pass to the limit the rescaled version of \eqref{eq:ensolcoh}, especially the term $\displaystyle\int_\Omega \Phi\left(x,\frac{u^\eps(t)}{\eps},\frac{\gamma^\eps(t)}{\eps}\right) \d x$, besides compactness of $u^\eps$ (actually trivial due to obvious $H^1$ bounds) it is crucial to identify a suitable sequence of sets, somehow modelling an $\eps$-debonded region, encoding the irreversibility information. Since no perimeter bounds are actually available, just weak convergence of the characteristic functions of such sets can be obtained. This creates two issues: in principle the limit evolution may not even be described by sets, and even if it was, a link with the limit displacement $u$ (and so with \eqref{eq:Atilde}) is definitely not clear. We overcome these problems by introducing an artificial evolution of sets $A(t)$, see \eqref{eq:A(t)}, such that the pair $(u(t),A(t))$ fulfils a version of the global stability condition and of an energy inequality (obtained by sending $\eps\to 0$ in \eqref{eq:ensolcoh}) close to the equivalent reformulation of \cite{MagRivTol}. By means of some further inspections, this is enough to ensure that \eqref{eq:Atilde} is actually a solution to the brittle debonding model.

	The contribution of this paper is thus twofold. On the one hand, we validate the cohesive approximation of brittle models in the framework of quasistatic debonding. On the other hand, we provide an alternative proof of existence of energetic solutions to the brittle debonding model.
	
	The paper is organized as follows. In Section~\ref{sec:setting} we present in detail the framework of the present work and we list the needed assumptions. We describe both the brittle and the cohesive debonding model, and we state our main result, regarding the asymptotic analysis of rescaled cohesive evolutions, in Theorem~\ref{thm:main}. Section~\ref{sec:proof} is devoted to its proof. We first recall the equivalent free-boundary reformulation of brittle debonding introduced in \cite{MagRivTol}, and then we provide suitable compactness properties for both displacements and $\eps$-debonded sets which will lead to the convergence of rescaled cohesive evolutions to solutions of such reformulation. At the end of the paper we attach Appendix~\ref{app}, where we sketch the proof of existence of energetic solutions to the cohesive debonding model.

	\subsection*{Notations}
	
	The maximum (resp. minimum) of two extended real numbers $\alpha,\beta\in \R\cup\{\pm\infty\}$ is denoted by $\alpha\vee\beta$ (resp. $\alpha\wedge\beta$). For the positive and negative part of a real function $f$ we write $f^+:=f\vee 0$ and $f^-:=-(f\wedge 0)$, respectively. The standard scalar product between vectors $v,w\in \R^d$ is indicated by $v\cdot w$.
	
	Given a set $\Omega$ in $\R^d$, we denote by $\mathcal M(\Omega)$ the class of its Lebesgue measurable subsets, and by $L^0(\Omega)$ the space of Lebesgue measurable functions on $\Omega$. We adopt standard notations for Lebesgue, Sobolev and Bochner spaces. The space of absolutely continuous functions from an interval $[a,b]$ to a Banach space $X$ is denoted by $AC([a,b];X)$. For any family of scalar functions $Y(\Omega)$, we indicate its subset of non-negative elements by $Y(\Omega)^+$.
	
	When dealing with Lebesgue classes, we often avoid to explicitely write that a certain property holds almost everywhere (or a.e.), although it in fact does. For the same reason, through the paper inclusions or equalities of sets are to be meant up to sets of null Lebesgue measure.

	\section{Setting and main result}\label{sec:setting}
	
	Let $\Omega\subseteq \R^d$, $d\in \N$, be an open, bounded, connected Lipschitz set representing the reference configuration of an elastic adhesive membrane. We assume that the membrane is initially debonded, i.e. adhesion is not present, on an open subset $A_0$ of $\Omega$, namely in the paper we are not considering the problem of debond initiation. On $\Omega\setminus A_0$ instead, the adhesive effects will be differently described, depending on whether we consider the brittle or the cohesive regime.
	
	The debonding process is then triggered by an external, time-dependent, prescribed displacement $w$ acting on a portion $\Gamma$ of the unstuck boundary $\partial\Omega\cap \partial A_0$ with positive Hausdorff measure. For technical reasons, we require that 
	\begin{equation}\label{eq:neighborhood}
		\text{$A_0$ contains a neighborhood of $\Gamma$.}
	\end{equation}
	Although this assumption surely excludes certain situations, somehow related to debond initiation, it is completely natural in view of the mechanical applications, where some space is needed in order to apply the prescribed displacement. As customary, the considered prescribed displacement is the trace of a function defined on the whole of $\Omega$, and we assume that
	\begin{subequations}\label{hyp:w}
	\begin{equation}\label{eq:w}
		w\in AC([0,T];H^1(\Omega))\quad \text{ satisfies }\quad0\le w\le M\text{ in }[0,T]\times \Omega,
	\end{equation}
for a given positive constant $M>0$. Without loss of generality, by a standard mollification argument, due to \eqref{eq:neighborhood} we can suppose that 
\begin{equation}\label{eq:w0}
	w\equiv 0\qquad \text{in }[0,T]\times(\Omega\setminus A_0).
\end{equation}
\end{subequations}
	
	\subsection{Brittle debonding}
	
	In the brittle scenario, modelling an abrupt debonding process, the behaviour of the adhesion between the membrane and the substrate is described by a function $\kappa$, usually called toughness, which satisfies
	\begin{equation}\label{eq:kappa}
		\kappa\in L^\infty(\Omega)^+ \text{ such that }\kappa>0 \text{ on }\Omega\setminus A_0\text{ and }\kappa=0\text{ on }A_0.
	\end{equation}
The energy dissipated during the debonding process in order to move from a debonded configuration, described by a set $A$, to a larger one, described by a second set $B$, is then modeled by
\begin{equation}\label{eq:diss}
	\int_{B\setminus A}\kappa \d x.
\end{equation}

In the quasistatic framework, a robust and well-established variational notion of solution, which we will adopt in this paper, is provided by energetic solutions. We refer to \cite{MielkRoubbook} for an extensive presentation. Such solutions are based on two ingredients: a time-dependent energy driving the evolution, and a dissipation distance describing the energy loss in time. The resulting process is then governed by a global stability condition together with an energy(-dissipation) balance.

In our setting, the dissipation is described by \eqref{eq:diss}, augmented with a natural monotonicity constraint modelling irreversibility of the debonding phenomenon. In order to introduce the internal energy we borrow the notation from \cite{MagRivTol}, where quasistatic evolutions for brittle debonding have been analyzed. Given $\eta\in H^1(\Omega)$ and $A\in \mc M(\Omega)$ we set
\begin{align*}
	H^1_{\Gamma,\eta}(\Omega)&:=\{\varphi\in H^1(\Omega):\, \varphi=\eta \text{ on }\Gamma \},\\
	H^1_{\Gamma,\eta}(\Omega,A)&:=\{\varphi\in H^1_{\Gamma,\eta}(\Omega):\, \varphi=0 \text{ on }\Omega\setminus A\}.
\end{align*}
The energy of a measurable set $A$ at a time $t$ is then defined as
\begin{equation}\label{eq:E}
	\mc E(t,A):=\min\limits_{v\in H^1_{\Gamma,w(t)}(\Omega,A)}\frac 12 \int_\Omega|\nabla v|^2\d x,
\end{equation}
namely it consists in the minimial Dirichlet energy among displacements $v$ supported in $A$ and attaining the correct boundary condition $w(t)$ on $\Gamma$. We also denote by $\mathfrak{h}_{A,w(t)}$ the (unique) minimizer of \eqref{eq:E}, so that by definition
\begin{equation*}
	\mc E(t,A)=\frac 12 \int_\Omega|\nabla \mathfrak{h}_{A,w(t)}|^2\d x.
\end{equation*}

\begin{defn}\label{defi:SES}
	Under the previous assumptions, we say that a set-valued map $[0,T]\ni t\mapsto A(t)$ is a \emph{shape energetic solution} of the \emph{brittle} debonding model if the following conditions are satisfied (in order, compatibility, initial datum, irreversibility, global stability and energy balance):
	\begin{itemize}[leftmargin = !, labelwidth=1.2cm, align = left]
		\item [{\crtcrossreflabel{\textup{(CO)}$_\text{S}$}[def:COS]}] $A(t)$ is open for all $t\in [0,T]$;
		\item[{\crtcrossreflabel{\textup{(ID)}$_\text{S}$}[def:IDS]}] $A(0)=A_0$;
		\item[{\crtcrossreflabel{\textup{(IR)}$_\text{S}$}[def:IRS]}] $A(s)\subseteq A(t)$ for all $0\le s\le t\le T$;
		\item[{\crtcrossreflabel{\textup{(GS)}$_\text{S}$}[def:GSS]}] for all $t\in [0,T]$ there holds
		\begin{equation}\label{eq:GS}
			\mathcal E(t,A(t))\le \mathcal E(t,B)+\int_{B\setminus A(t)}\kappa\, dx,\quad \text{for all $B\in \mathcal M(\Omega)$ such that $A(t)\subseteq B$};
		\end{equation}
		\item[{\crtcrossreflabel{\textup{(EB)}$_\text{S}$}[def:EBS]}] the map $\displaystyle t\mapsto \int_\Omega \nabla \dot w(t)\cdot \nabla \mathfrak{h}_{A(t),w(t)} \, dx$ belongs to $L^1(0,T)$ and for all $t\in [0,T]$ there holds
		\begin{equation*}
			\mathcal E(t,A(t))+\int_{A(t)\setminus A_0}\kappa\,d x=\mc E(0,A_0)+\int_0^t  \int_\Omega \nabla \dot w(\tau)\cdot \nabla \mathfrak{h}_{A(\tau),w(\tau)} \, dx\, d\tau.
		\end{equation*}
	\end{itemize}
\end{defn}
	
	\subsection{Cohesive debonding}
	
	\begin{figure}
		\subfloat{\includegraphics[scale=.78]{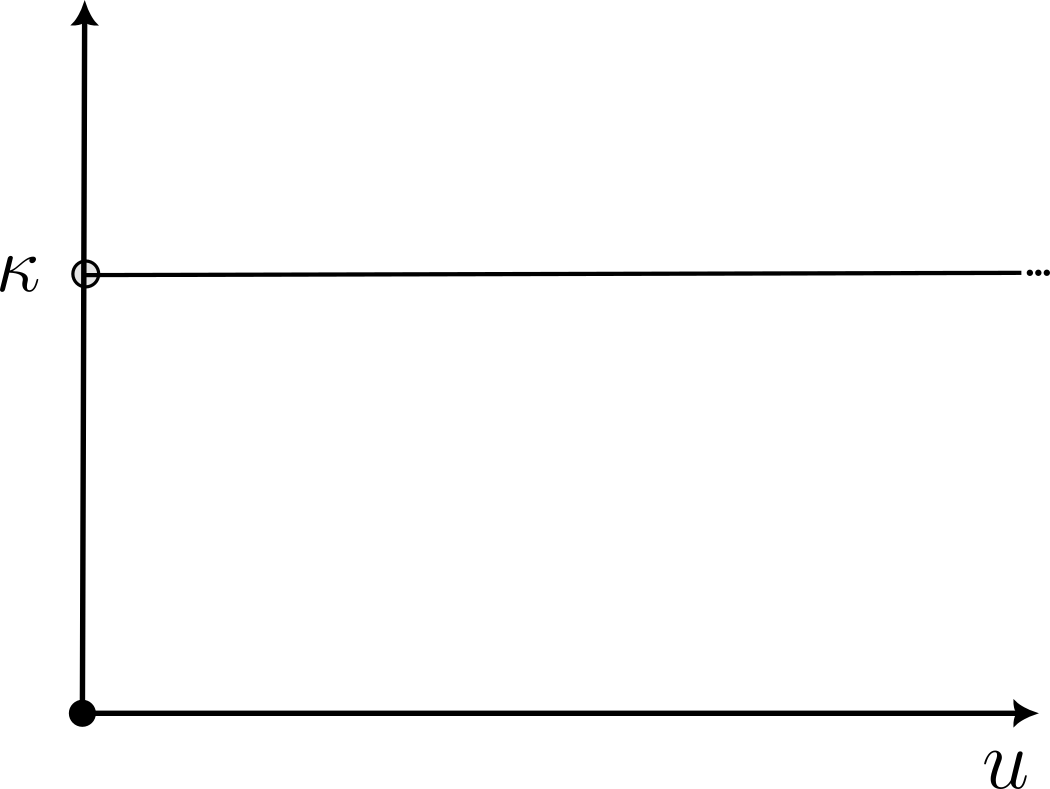}}\qquad\quad\subfloat{\includegraphics[scale=.78]{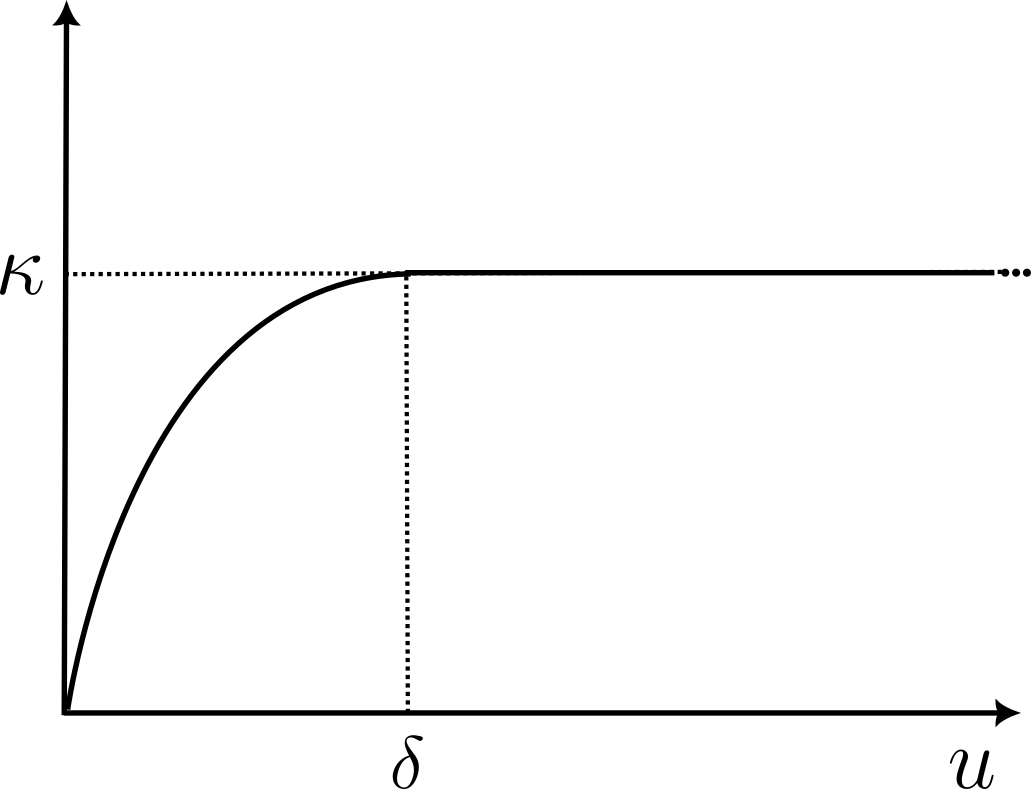}}\caption{Density of brittle adhesion (left) and cohesive adhesion (right) with respect to the displacement $u$.}\label{fig:1}
	\end{figure}
	
	While in brittle laws the adhesion is either completely active or completely broken, and in the latter case the (infinitesimal) dissipated energy equals the toughness $\kappa$, cohesive laws allow for intermediate states, somehow describing partial debonding. A proper description of this gradual behaviour is usually given in terms of displacements rather than of sets. Roughly speaking, as illustrated in Figure~\ref{fig:1}, complete debonding occurs just when the displacement overcomes a certain threshold $\delta>0$.
	
	Moreover, in order to incorporate the irreversibility of the debonding process in the model, an additional monotone variable (also called, history variable) $\gamma$, recording the maximal displacement reached during the evolution, is commonly introduced. In a certain sense, in the spirit of a damage variable, it is needed to trace the partial degradation the adhesion undergoes due to small elongations, whose possible interpretation is the rupture of a fraction of the microscopic adhesive bonds between membrane and substrate. In a smooth case, such history variable is simply given by
	\begin{equation}\label{eq:gamma}
		\gamma(t,x)=\sup\limits_{s\in [0,t]}|u(s,x)|,
	\end{equation}
where $u$ denotes the current displacement of the membrane.
	
	The cohesive energy we consider in the present paper is then modeled by
	\begin{equation}\label{eq:cohesive}
		\int_\Omega\Phi(x,|u|,\gamma)\d x,
	\end{equation}
	where the cohesive density $\Phi\colon \Omega\times [0,+\infty)\times[0,+\infty)\to [0,+\infty)$ is a Carath\'eodory function satisfying the following assumptions:
	
	\begin{enumerate}[label=\textup{($\Phi$\arabic*)}, start=1]	
		\item \label{hyp:phi1} $\Phi(x,0,0)=0$ for a.e. $x\in \Omega$;
		\item \label{hyp:phi2} $\Phi(x,y,z)\le K$ for some constant $K>0$, and for a.e. $x\in \Omega$, for all $y,z\ge 0$;
		\item \label{hyp:phi3} the map $y\mapsto \Phi(x,y,z)$ is Lipschitz continuous in $[0,+\infty)$, with Lipschitz constant independent of $x\in \Omega$ and of $z\ge 0$;
		\item \label{hyp:phi4} $\Phi(x,y,z)=\Phi(x,y,y{\vee}z)$ for a.e. $x\in\Omega$ and for all $y,z\ge 0$;
		\item \label{hyp:phi5} for a.e. $x\in\Omega$ the map $(y,z)\mapsto\Phi(x,y,z)$ is nondecreasing with respect to each component;
		\item\label{hyp:phi6} $\Phi(x,\cdot,\cdot)\equiv 0$ for a.e. $x\in A_0$, while $\Phi(x,y,z)> 0$ for a.e. $x\in \Omega\setminus A_0$ and for all $y,z> 0$.
	\end{enumerate}
Assumption \ref{hyp:phi6} is a compatibility condition between $\Phi$ and the initial debonded region, highlighting the fact that on $A_0$ no adhesion effects are present.

The dependence of $\Phi$ on the two variables $|u|$ and $\gamma$ allows to include in a proper and general way the different mechanical responses observed during two opposite regimes: the loading phase, when $|u|=\gamma$, and the unloading phase, when $|u|<\gamma$. The former encompasses the irreversible part of the cohesive process, while the latter describes residual elastic (and thus, reversible) effects, possibly due to the remaining active adhesive bonds. We refer to \cite{FredRiva, NegVit} for a deeper insight on the modelling of these two regimes.

\begin{ex}
		\begin{figure}
		\includegraphics[scale=.78]{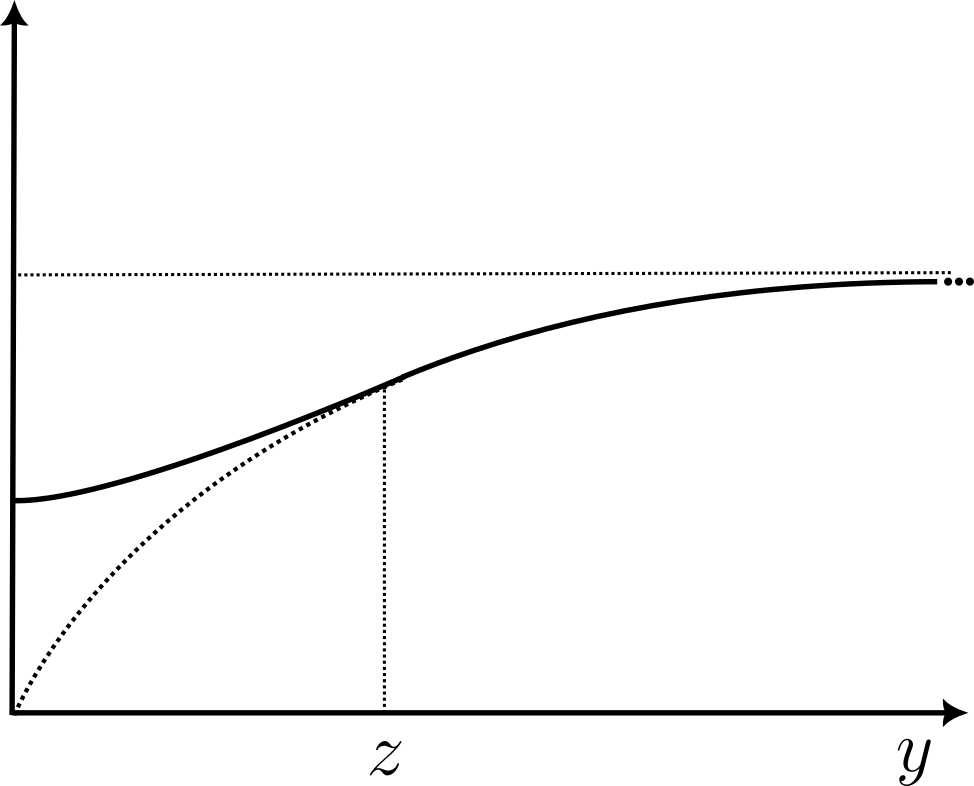}\caption{Graph of a typical cohesive density $y\mapsto \Phi(y,z)$, for a fixed $z>0$.}\label{fig:2}
	\end{figure}
	The typical example of cohesive energy density, featuring a quadratic unloading, is given by
	\begin{equation}\label{Phi}
		\Phi(y,z):=\begin{cases}\displaystyle
			\frac{\psi'(z)}{2z}y^2+\psi(z)-\frac{z\psi'(z)}{2}, &\text{if }y<z,\\
			\psi(y), &\text{otherwise},
		\end{cases}
	\end{equation}
	where for the sake of simplicity we are not considering the dependence on $x$, which anyway can be easily implemented in order to fulfil \ref{hyp:phi6}. The function $\psi\colon [0,+\infty)\to [0,+\infty)$, which models the loading phase, is nondecreasing, bounded, concave, of class $C^2$ and such that $\psi'(0)>0=\psi(0)$. See also Figure~\ref{fig:2}. It is not difficult to check that a density of the form \eqref{Phi} satisfies assumptions \ref{hyp:phi1}-\ref{hyp:phi5}. We refer to \cite[Proposition 1.4.]{Riv} for more details.
\end{ex}

We are now in a position to introduce the generalized notion of energetic solution we adopt in this paper to describe the quasistatic cohesive debonding model. The term generalized is here used since the history variable $\gamma$ is not required to fulfil exactly \eqref{eq:gamma}, but it is just a nondecreasing function larger than the maximal displacement. Nevertheless, we mention that equality \eqref{eq:gamma} could be recovered under stricter hypotheses, arguing as in \cite[Section 3.2]{Riv}. Anyway, in view of the asymptotic analysis illustrated in Section~\ref{sec:result}, our goal is to present the weakest notion of solution for which the convergence to a brittle evolution can be actually proved.

We finally point out that, differently from the brittle case where \eqref{eq:diss} describes all the occuring dissipation, expression \eqref{eq:cohesive} mixes dissipative effects (related to $\gamma$) with some residual internal energy (related to $|u|$). This contrast is reflected in the global stability condition \ref{def:GS} below, which presents a slightly different structure than \eqref{eq:GS}.

As an initial state we consider a pair $(u_0,\gamma_0)\in H^1_{\Gamma,w(0)}(\Omega)\times L^0(\Omega)^+$, and we assume it is globally stable, meaning that
\begin{equation}\label{eq:stable0}
	\begin{gathered}
		\gamma_0\ge |u_0|\quad\text{ and}\\
		\frac 12 \int_\Omega|\nabla u_0|^2\d x+\!\int_\Omega\!\Phi(x,|u_0|,\gamma_0)\d x\le \frac 12 \int_\Omega|\nabla v|^2\d x+\int_\Omega\!\Phi(x,|v|,\gamma_0)\d x, \text{ for all }v\in H^1_{\Gamma, w(0)}(\Omega).
	\end{gathered}
\end{equation}

\begin{defn}\label{def:genensol}
	Under the previous assumptions, we say that a map  $[0,T]\ni t\mapsto (u(t),\gamma(t))\in  H^1(\Omega)\times L^0(\Omega)^+$ is a \emph{generalized energetic solution} of the \emph{cohesive} debonding model if 
	
	\begin{itemize}[leftmargin = !, labelwidth=1.2cm, align = left]
		\item [{\crtcrossreflabel{\textup{(CO)}}[def:CO]}] $u(t)\in H^1_{\Gamma,w(t)}(\Omega)$ for all $t\in [0,T]$;
		\item[{\crtcrossreflabel{\textup{(ID)}}[def:ID]}] $(u(0),\gamma(0))=(u_0,\gamma_0)$;
		\item[{\crtcrossreflabel{\textup{(IR)}}[def:IR]}] $\gamma(s)\le \gamma(t)$ for all $0\le s\le t\le T$;
		\item[{\crtcrossreflabel{\textup{(GS)}}[def:GS]}] for all $t\in [0,T]$ there hold $\gamma(t)\ge |u(t)|$ and
		\begin{equation*}
			\frac 12 \int_\Omega|\nabla u(t)|^2\d x+\int_\Omega\Phi(x,|u(t)|,\gamma(t))\d x\le \frac 12 \int_\Omega|\nabla v|^2\d x+\int_\Omega\Phi(x,|v|,\gamma(t))\d x,
		\end{equation*}
	for all $v\in H^1_{\Gamma, w(t)}(\Omega)$;
		\item[{\crtcrossreflabel{\textup{(EB)}}[def:EB]}] the map $\displaystyle t\mapsto \int_\Omega \nabla \dot w(t)\cdot \nabla u(t) \, dx$ belongs to $L^1(0,T)$ and for all $t\in [0,T]$ there holds
		\begin{equation*}
			\begin{aligned}
				&\frac 12 \int_\Omega|\nabla u(t)|^2\d x+\int_\Omega\Phi(x,|u(t)|,\gamma(t))\d x\\
				=&\frac 12 \int_\Omega |\nabla u_0|^2\d x+\int_\Omega \Phi(x,|u_0|,\gamma_0) \d x+\int_0^t  \int_\Omega \nabla \dot w(\tau)\cdot \nabla u(\tau) \d x\d\tau.
			\end{aligned}
		\end{equation*}
	\end{itemize}
\end{defn}

Existence of generalized energetic solutions can be proved arguing as in \cite{BonCavFredRiva,FredRiva,Riv}, with small adjustments. For the sake of completeness, we provide a short proof in Appendix~\ref{app}.

The following proposition collects some basic properties of generalized energetic solutions.

\begin{prop}\label{prop:propertiesGES}
	Let $(u,\gamma)$ be a generalized energetic solution of the cohesive debonding model. Then $u(t)$ is nonnegative and bounded for all $t\in [0,T]$, and it holds
	\begin{equation}\label{eq:bounds}
		\sup\limits_{t\in [0,T]}\|u(t)\|_{L^\infty(\Omega)}\le M,\qquad \sup\limits_{t\in [0,T]}\|u(t)\|_{H^1(\Omega)}\le C,
	\end{equation}
where $M$ is the constant appearing in \eqref{eq:w}, while $C$ just depends on $\|w\|_{C^0([0,T];H^1(\Omega))}$ and on the constant $K$ from \ref{hyp:phi2}.

In particular, the map $\displaystyle t\mapsto \int_\Omega \nabla \dot w(t)\cdot \nabla u(t) \, dx$ is automatically in $L^1(0,T)$. 
\end{prop}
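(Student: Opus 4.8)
The plan is to extract all four assertions directly from the global stability condition \ref{def:GS}, tested against carefully chosen competitors, using nothing about $\Phi$ beyond the uniform bound \ref{hyp:phi2} and the monotonicity \ref{hyp:phi5}, together with the constraint $0\le w\le M$ on $\Gamma$ coming from \eqref{eq:w}, the positivity of $\mathcal H^{d-1}(\Gamma)$, and the connectedness of $\Omega$.

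For nonnegativity, fix $t\in[0,T]$ and use in \ref{def:GS} the competitor $v:=u(t)^+$; it is admissible since $u(t)=w(t)\ge 0$ on $\Gamma$, hence $v=w(t)$ there. From $|\nabla u(t)|^2=|\nabla u(t)^+|^2+|\nabla u(t)^-|^2$ a.e., from $u(t)^+\le|u(t)|$, and from \ref{hyp:phi5}, the stability inequality reduces, after cancelling the common term $\tfrac12\int_\Omega|\nabla u(t)^+|^2\d x$, to
\begin{equation*}
	\tfrac12\int_\Omega|\nabla u(t)^-|^2\d x\le\int_\Omega\big(\Phi(x,u(t)^+,\gamma(t))-\Phi(x,|u(t)|,\gamma(t))\big)\d x\le 0,
\end{equation*}
so $\nabla u(t)^-=0$; being constant on the connected set $\Omega$ and vanishing on $\Gamma$, the function $u(t)^-$ is identically $0$. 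The $L^\infty$ bound follows by the very same computation with $v:=u(t)\wedge M$ (admissible because $u(t)=w(t)\le M$ on $\Gamma$), which, via $|\nabla u(t)|^2=|\nabla(u(t)\wedge M)|^2+|\nabla(u(t)-M)^+|^2$ and $|u(t)\wedge M|\le|u(t)|$, yields $\nabla(u(t)-M)^+=0$ and hence $(u(t)-M)^+\equiv 0$; together with nonnegativity this is the first estimate in \eqref{eq:bounds}.

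For the $H^1$ estimate, test \ref{def:GS} with $v:=w(t)$, drop the nonnegative term $\int_\Omega\Phi(x,|u(t)|,\gamma(t))\d x$ on the left and bound the analogous term on the right by $K|\Omega|$ through \ref{hyp:phi2}; this gives $\int_\Omega|\nabla u(t)|^2\d x\le\int_\Omega|\nabla w(t)|^2\d x+2K|\Omega|$, uniformly in $t$. Since $u(t)-w(t)$ vanishes on $\Gamma$, a Poincar\'e inequality on the connected Lipschitz domain $\Omega$ (with Dirichlet datum on the portion $\Gamma$, which has positive surface measure) promotes this to a uniform bound on $\|u(t)\|_{H^1(\Omega)}$ depending only on $\Omega$, $\Gamma$, $K$ and $\|w\|_{C^0([0,T];H^1(\Omega))}$, which is the second estimate in \eqref{eq:bounds}. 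Finally, by Cauchy--Schwarz and this bound one has $\big|\int_\Omega\nabla\dot w(t)\cdot\nabla u(t)\d x\big|\le C\|\dot w(t)\|_{H^1(\Omega)}$ for a.e.\ $t$, and the right-hand side lies in $L^1(0,T)$ because $w\in AC([0,T];H^1(\Omega))$; combined with the measurability in time of $t\mapsto\int_\Omega\nabla\dot w(t)\cdot\nabla u(t)\d x$ (inherited from the time-discretization scheme used to build generalized energetic solutions, cf.\ Appendix~\ref{app}), this proves the last assertion.

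I do not expect any real obstacle here: the only delicate points are checking that the truncated competitors lie in $H^1_{\Gamma,w(t)}(\Omega)$ — which is exactly what the constraint $0\le w\le M$ in \eqref{eq:w} secures — and deducing \lq\lq the function is zero\rq\rq\ from \lq\lq its gradient is zero\rq\rq, for which connectedness of $\Omega$ and $\mathcal H^{d-1}(\Gamma)>0$ are needed; for the $H^1$ bound one must also resist closing the estimate via the already-established $L^\infty$ bound, which would make $C$ depend spuriously on $M$.
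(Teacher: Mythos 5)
Your proof is correct and follows essentially the same route as the paper: testing \ref{def:GS} with the competitors $w(t)$, $u(t)^+$ and $u(t)\wedge M$, using \ref{hyp:phi2}, \ref{hyp:phi5} and Poincar\'e for the $H^1$ bound, and Cauchy--Schwarz with $w\in AC([0,T];H^1(\Omega))$ for the $L^1$ claim. The only differences are cosmetic: you spell out the step from $\nabla u(t)^-=0$ to $u(t)^-\equiv 0$ via connectedness and the trace on $\Gamma$ (which the paper leaves implicit), and your appeal to the approximation scheme for time-measurability is unnecessary since $L^1$ membership is already built into \ref{def:EB}, so only the uniform bound needs proving.
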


\begin{proof} 
	The $H^1$ bound easily follows by using $w(t)$ as a competitor in the global stability condition \ref{def:GS}. Indeed from \ref{hyp:phi2} we deduce
	\begin{equation*}
		\frac 12 \int_\Omega|\nabla u(t)|^2\d x\le \frac 12 \int_\Omega|\nabla w(t)|^2\d x+K|\Omega|,
	\end{equation*}
and we conclude by means of Poincar\'e inequality.
	
	To obtain the estimate $0\le u(t)\le M$ instead, we use as competitors the functions $u(t)^+$ and $u(t)\wedge M$, respectively. In the first case we infer
	\begin{align*}
		\frac 12 \int_\Omega|\nabla u(t)|^2\d x+\int_\Omega\Phi(x,|u(t)|,\gamma(t))\d x&\le \frac 12 \int_{\{u(t)>0\}}|\nabla u(t)|^2\d x+\int_\Omega\Phi(x,u(t)^+,\gamma(t))\d x\\
		&\le \frac 12 \int_{\{u(t)>0\}}|\nabla u(t)|^2\d x+\int_\Omega\Phi(x,|u(t)|,\gamma(t))\d x,
	\end{align*}
where the second inequality follows from \ref{hyp:phi5}. The above estimate yields $$\int_{\{u(t)\le 0\}}|\nabla u(t)|^2\d x=0,$$ whence $u(t)$ is nonnegative.

The very same argument with $u(t)\wedge M$ implies that $u(t)\le M$, and we conclude.
\end{proof}

	\subsection{From cohesive to brittle}\label{sec:result}
	
	We can now introduce the asymptotic problem at the core of the present work. To this aim, in order to link the cohesive and the brittle scenarios we additionally assume that the cohesive density $\Phi$ satisfies:
	\begin{enumerate}[label=\textup{($\Phi$\arabic*)}, start=7]
		\item\label{hyp:phi7} for a.e. $x\in \Omega$ there exists $\kappa_\Phi(x):=\lim\limits_{(y,z)\to \infty}\Phi(x,y,z)= \sup\limits_{y,z\ge 0}\Phi(x,y,z)$.
	\end{enumerate}
Such assumption introduces the proper notion of brittle toughness which will arise from the cohesive density. Note that we are requiring that $\kappa_\Phi$ is reached when the pair $(y,z)$ goes to infinity, and observe that the typical example \eqref{Phi} satisfies \ref{hyp:phi7} with $\kappa_\Phi=\lim\limits_{z\to \infty}\psi(z)$. Furthermore, due to \ref{hyp:phi2} and \ref{hyp:phi6}, the function $\kappa_\Phi$ directly fulfils \eqref{eq:kappa}.
	
For a small parameter $\eps>0$ we define the rescaled cohesive density 
	\begin{equation*}
		\Phi^\eps(x,y,z):=\Phi\left(x,\frac y\eps,\frac z\eps\right),
	\end{equation*}	
and we consider any generalized energetic solution $(u^\eps,\gamma^\eps)$ related to $\Phi^\eps$ and starting from an initial state $(u_0^\eps,\gamma_0^\eps)\in H^1_{\Gamma,w(0)}(\Omega)\times L^0(\Omega)^+$ which is globally stable in the sense of \eqref{eq:stable0} (with $\Phi^\eps$). Namely, recalling Proposition~\ref{prop:propertiesGES}, it satisfies
\begin{itemize}[leftmargin = !, labelwidth=1.2cm, align = left]
	\item [{\crtcrossreflabel{\textup{(CO)$^\eps$}}[def:COeps]}] $u^\eps(t)\in H^1_{\Gamma,w(t)}(\Omega)^+$ for all $t\in [0,T]$;
	\item[{\crtcrossreflabel{\textup{(ID)$^\eps$}}[def:IDeps]}] $(u^\eps(0),\gamma^\eps(0))=(u_0^\eps,\gamma_0^\eps)$;
	\item[{\crtcrossreflabel{\textup{(IR)$^\eps$}}[def:IReps]}] $\gamma^\eps(s)\le \gamma^\eps(t)$ for all $0\le s\le t\le T$;
	\item[{\crtcrossreflabel{\textup{(GS)$^\eps$}}[def:GSeps]}] for all $t\in [0,T]$ there hold $\gamma^\eps(t)\ge u^\eps(t)$ and
	\begin{equation*}
		\frac 12 \int_\Omega|\nabla u^\eps(t)|^2\d x+\int_\Omega\Phi\left(x,\frac{u^\eps(t)}{\eps},\frac{\gamma^\eps(t)}{\eps}\right)\d x\le \frac 12 \int_\Omega|\nabla v|^2\d x+\int_\Omega\Phi\left(x,\frac{|v|}{\eps},\frac{\gamma^\eps(t)}{\eps}\right)\d x,
	\end{equation*}
	for all $v\in H^1_{\Gamma, w(t)}(\Omega)$;
	\item[{\crtcrossreflabel{\textup{(EB)$^\eps$}}[def:EBeps]}] for all $t\in [0,T]$ there holds
	\begin{equation*}
		\begin{aligned}
			&\frac 12 \int_\Omega|\nabla u^\eps(t)|^2\d x+\int_\Omega\Phi\left(x,\frac{u^\eps(t)}{\eps},\frac{\gamma^\eps(t)}{\eps}\right)\d x\\
			=&\frac 12 \int_\Omega |\nabla u_0^\eps|^2\d x+\int_\Omega \Phi\left(x,\frac{u_0^\eps}{\eps},\frac{\gamma_0^\eps}{\eps}\right) \d x+\int_0^t  \int_\Omega \nabla \dot w(\tau)\cdot \nabla u^\eps(\tau)  \d x\d\tau.
		\end{aligned}
	\end{equation*}
\end{itemize}

The asymptotic analysis as $\eps\to 0$ of the cohesive rescaled evolution $(u^\eps,\gamma^\eps)$ to a brittle one is the content of our main result, Theorem~\ref{thm:main}. As often happens with singular limits, we require that the initial data $(u_0^\eps,\gamma_0^\eps)$ are well-prepared, in the sense that
\begin{equation}\label{eq:wellprepared}
	\lim\limits_{\eps\to 0}\int_\Omega \Phi\left(x,\frac{u_0^\eps}{\eps},\frac{\gamma_0^\eps}{\eps}\right) \d x=0.
\end{equation}
This ensures that no artificial debonding is produced by the initial state. Observe that the simplest situation in which both the global stability condition \eqref{eq:stable0} and \eqref{eq:wellprepared} are satisfied occurs when the membrane is initially at rest, i.e. $(u_0^\eps,\gamma_0^\eps)=(0,0)$. Indeed, in this case one has
\begin{equation*}
	\frac 12 \int_\Omega |\nabla u_0^\eps|^2\d x+\int_\Omega \Phi\left(x,\frac{u_0^\eps}{\eps},\frac{\gamma_0^\eps}{\eps}\right) \d x=\int_\Omega \Phi(x,0,0) \d x=0,
\end{equation*}
due to \ref{hyp:phi1}. Of course, this may happen just if $w(0)\equiv 0$ on $\Gamma$, which nevertheless represents a realistic and natural situation.
\begin{thm}\label{thm:main}
	Assume that the prescribed displacement $w$ satisfies \eqref{hyp:w} and the cohesive density $\Phi$ fulfils \ref{hyp:phi1}-\ref{hyp:phi7}. Let $(u^\eps,\gamma^\eps)$ be any generalized energetic solution of the rescaled cohesive debonding model starting from an initial state $(u_0^\eps,\gamma_0^\eps)$ satisfying \eqref{eq:stable0} and \eqref{eq:wellprepared}. Then there exists a map $u\colon [0,T]\to H^1(\Omega)^+$ such that, up to a nonrelabelled subsequence, for all $t\in [0,T]$ there holds
	\begin{equation*}
		u^\eps(t)\xrightarrow[\eps\to 0]{} u(t),\text{ strongly in $H^1(\Omega)$ and weakly$^*$ in }L^\infty(\Omega).
	\end{equation*}
Moreover, introducing the set
\begin{equation}\label{eq:Aut}
	A_{u(t)}:=A_0\cup\bigcup_{s\in [0,t]}\{u(s)>0\},
\end{equation}
it defines a shape energetic solution of the brittle debonding model in the sense of Definition~\ref{defi:SES}, with respect to the toughness $\kappa_\Phi$ introduced in \ref{hyp:phi7}.
\end{thm}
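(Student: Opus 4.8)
The plan is to establish a priori compactness of the family $u^\eps$, to pass to the limit in the global stability condition \ref{def:GSeps} and the energy balance \ref{def:EBeps}, and then to recognise the limit as a solution of the free-boundary reformulation of brittle debonding recalled in Proposition~\ref{prop:equiv}. For compactness, Proposition~\ref{prop:propertiesGES} gives bounds on $\|u^\eps(t)\|_{H^1(\Omega)}$ and $\|u^\eps(t)\|_{L^\infty(\Omega)}$ uniform in $t$ and $\eps$, so for each $t$ the family $\{u^\eps(t)\}$ is weakly-$H^1$ precompact. To obtain a single subsequence working for every $t$ I would use two facts: the energies $t\mapsto\frac12\int_\Omega|\nabla u^\eps(t)|^2\d x+\int_\Omega\Phi^\eps(x,u^\eps(t)/\eps,\gamma^\eps(t)/\eps)\d x$ are equi-absolutely continuous (immediate from \ref{def:EBeps} and $w\in AC([0,T];H^1(\Omega))$), hence convergent for all $t$ along a subsequence by a Helly-type selection; and the $\eps$-debonded sets one attaches to the evolution are monotone in $t$, so their characteristic functions converge weakly-$*$ in $L^\infty(\Omega)$ for every $t$, again by Helly. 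This produces a candidate limit set $A(t)$, nondecreasing in $t$ and containing $A_0$; strong $H^1$ convergence and the identification of the weak limit of $u^\eps(t)$ come afterwards.

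\emph{Initial datum.} Using $\mathfrak h_{A_0,w(0)}$ as competitor in the stability \eqref{eq:stable0} of $(u_0^\eps,\gamma_0^\eps)$, and bounding the residual term $\int_{\Omega\setminus A_0}\Phi^\eps(x,0,\gamma_0^\eps/\eps)\d x$ by $\int_\Omega\Phi^\eps(x,u_0^\eps/\eps,\gamma_0^\eps/\eps)\d x$ via \ref{hyp:phi5} (which tends to $0$ by \eqref{eq:wellprepared}), one gets $\limsup_\eps\frac12\int_\Omega|\nabla u_0^\eps|^2\d x\le\mc E(0,A_0)$; on the other hand \eqref{eq:wellprepared}, \ref{hyp:phi6} and \ref{hyp:phi7} force any weak limit $u_0^*$ to satisfy $\{u_0^*>0\}\subseteq A_0$, whence $\frac12\int_\Omega|\nabla u_0^*|^2\d x\ge\mc E(0,A_0)$. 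Therefore $u_0^\eps\to\mathfrak h_{A_0,w(0)}$ strongly in $H^1(\Omega)$ and $\frac12\int_\Omega|\nabla u_0^\eps|^2\d x\to\mc E(0,A_0)$, which pins down the right-hand side of the limiting energy relation.

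\emph{Limiting stability and energy inequality.} Given a measurable $B\supseteq A(t)$, I test \ref{def:GSeps} with $v=\mathfrak h_{B,w(t)}$: the elastic term is weakly lower semicontinuous, $\int_\Omega\nabla\dot w\cdot\nabla u^\eps$ passes to the limit by dominated convergence, and on $\{\mathfrak h_{B,w(t)}>0\}$ one has $\Phi^\eps(x,\mathfrak h_{B,w(t)}/\eps,\gamma^\eps(t)/\eps)\to\kappa_\Phi(x)$ thanks to \ref{hyp:phi4}, \ref{hyp:phi5}, \ref{hyp:phi7}, the rest being a residual term controlled as in the previous step; this yields $\frac12\int_\Omega|\nabla u(t)|^2\d x+\int_{A(t)}\kappa_\Phi\d x\le\mc E(t,B)+\int_{B\setminus A(t)}\kappa_\Phi\d x$, where $u(t)$ denotes the weak limit of $u^\eps(t)$. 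Passing to the limit in \ref{def:EBeps}, using lower semicontinuity and the initial-datum step, gives $\frac12\int_\Omega|\nabla u(t)|^2\d x+(\text{dissipated energy})\le\mc E(0,A_0)+\int_0^t\int_\Omega\nabla\dot w(\tau)\cdot\nabla u(\tau)\d x\d\tau$. Choosing $B=A(t)$ above shows $u(t)=\mathfrak h_{A(t),w(t)}$, hence $A_{u(t)}$ of \eqref{eq:Aut} agrees with $A(t)$ up to null sets and $\mc E(t,A_{u(t)})=\frac12\int_\Omega|\nabla u(t)|^2\d x$; uniqueness of this minimiser also makes the whole (sub)sequence $u^\eps(t)\rightharpoonup u(t)$ for every $t$.

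\emph{Conclusion and main obstacle.} Up to the residual bookkeeping, the two displayed relations are precisely the free-boundary reformulation of \cite{MagRivTol}, so Proposition~\ref{prop:equiv} converts \ref{def:GSS} and the energy inequality into a genuine shape energetic solution; the opposite energy inequality follows from \ref{def:GSS} by the usual Riemann-sum argument, while \ref{def:COS} (openness of $A_{u(t)}$ from harmonicity of $u(t)$ on its positivity set), \ref{def:IDS} and \ref{def:IRS} are read off the construction. Finally, the limiting energy balance together with lower semicontinuity forces $\frac12\int_\Omega|\nabla u^\eps(t)|^2\d x\to\frac12\int_\Omega|\nabla u(t)|^2\d x$, i.e. strong $H^1$ convergence. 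I expect the main difficulty to lie in the stability passage: no perimeter bounds are available, so only weak-$*$ convergence of the characteristic functions of the $\eps$-debonded sets can be exploited, and the limiting debonded region together with its link to the limit displacement is a priori obscure; moreover, unlike in the brittle case, $\Phi$ entangles genuine dissipation with reversible residual energy, which has to be disentangled to recover exactly $\kappa_\Phi$ in the limit. Reconciling all this with the reformulation of \cite{MagRivTol}, and ensuring every estimate holds for \emph{every} $t$ rather than merely almost every $t$, is where the real work is.
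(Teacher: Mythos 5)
Your overall architecture coincides with the paper's (compactness, limit passage in \ref{def:GSeps} and \ref{def:EBeps}, conclusion via the free-boundary reformulation of Proposition~\ref{prop:equiv}), and several individual steps are sound: the treatment of the initial datum via \eqref{eq:wellprepared}, the use of \ref{hyp:phi4}, \ref{hyp:phi5}, \ref{hyp:phi7} on positivity sets of test functions, and the strong $H^1$ convergence obtained by testing with the limit. However, there is a genuine gap at exactly the point you yourself flag as ``where the real work is'': you never construct the $\eps$-debonded sets, and without a concrete construction the limit passage cannot be closed. The paper's key device is $A^\eps(t):=A_0\cup\{\gamma^\eps(t)\ge \eps\delta_\eps\}$ with thresholds chosen so that $\delta_\eps\to+\infty$ and $\eps\delta_\eps\to 0$ (see \eqref{eq:deltaeps}--\eqref{eq:Aepst}). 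The first condition forces $\gamma^\eps(t)/\eps\ge\delta_\eps\to\infty$ on $A^\eps(t)\setminus A_0$ and hence gives \eqref{eq:1}, i.e. $\int_{A^\eps(t)}\Phi\left(x,\tfrac{u^\eps(t)}{\eps},\tfrac{\gamma^\eps(t)}{\eps}\right)\d x\to\int_\Omega\kappa_\Phi\rho(t)\d x$; this is precisely the quantity you leave unspecified as ``dissipated energy'' in the limit of \ref{def:EBeps}, and it is also what handles the residual term $\int_{\{v=0\}}\Phi\left(x,0,\tfrac{\gamma^\eps(t)}{\eps}\right)\d x$ at positive times: your claim that this residual is ``controlled as in the previous step'' is not correct, since well-preparedness only controls $t=0$; the paper splits it along $A^\eps(t)$ and absorbs the part outside $A^\eps(t)$ into the left-hand side via \ref{hyp:phi5}. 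The second condition $\eps\delta_\eps\to 0$ is what yields the inclusion $\{u(t)>0\}\subseteq A(t):=\{\rho(t)=1\}$ (see \eqref{eq:inclusion}): if $u(t,x)>0$ then eventually $u^\eps(t,x)\ge\eps\delta_\eps$, hence $\gamma^\eps(t,x)\ge\eps\delta_\eps$. Without this inclusion your step ``choosing $B=A(t)$ shows $u(t)=\mathfrak{h}_{A(t),w(t)}$'' does not follow: the inequality $\frac12\int_\Omega|\nabla u(t)|^2\d x\le \mc E(t,A(t))$ identifies the minimizer only once you know that $u(t)$ vanishes outside $A(t)$, and likewise no relation between $A_{u(t)}$ and $A(t)$ is available. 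Naming the obstacle is not the same as overcoming it; this construction and the pointwise argument behind \eqref{eq:inclusion} are the core of the proof, not bookkeeping.

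Two further, smaller points. First, your assertion that $A_{u(t)}$ ``agrees with $A(t)$ up to null sets'' is unproved and may fail; it is also unnecessary: the paper only establishes $A_{u(t)}\subseteq A(t)$ (a consequence of \eqref{eq:inclusion} and monotonicity of $A(\cdot)$), which suffices for \ref{def:GS'} and \ref{def:EI'} because $\int_{\{v>0\}\setminus A(t)}\kappa_\Phi\d x\le\int_{\{v>0\}\setminus A_{u(t)}}\kappa_\Phi\d x$ and $\int_{A_{u(t)}\setminus A_0}\kappa_\Phi\d x\le\int_\Omega\kappa_\Phi\rho(t)\d x$. Second, testing \ref{def:GSeps} directly with $v=\mathfrak{h}_{B,w(t)}$ for a measurable $B\supseteq A(t)$ is delicate, since $A(t)\subseteq\{\mathfrak{h}_{B,w(t)}>0\}$ need not hold, so the limiting inequality with $\int_{A(t)}\kappa_\Phi\d x$ on the left does not come out as stated; the paper instead tests with nonnegative continuous $v$ satisfying $A(t)\subseteq\{v>0\}$ and then upgrades to arbitrary $v\in H^1_{\Gamma,w(t)}(\Omega)$ via Lemma~\ref{lemma:equivalent}, which is also how it lands exactly on the form \ref{def:GS'} required by Proposition~\ref{prop:equiv}.
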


This theorem may look bizarre at a first sight, since it does not involve the history variable $\gamma^\eps$, which is the only source of irreversibility in the cohesive problem. Although it is not explicitely present in the statement, we however stress that its role will be crucial in the proof of the result. Indeed, the construction of an approximated debonded region will be based on $\gamma^\eps$, see \eqref{eq:Aepst}.

	\section{Proof of Theorem \ref{thm:main}}\label{sec:proof}
	
	The argument we propose in order to show the convergence of rescaled cohesive evolutions to brittle ones strongly relies on an equivalent reformulation of shape energetic solutions in terms of displacements, recently introduced in \cite{MagRivTol} as a proper way to prove existence of quasistatic solutions to the brittle debonding model by means of Minimizing Movements. 
	
	After recalling what we need of such reformulation in Section~\ref{sec:SES}, we present a suitable compactness result in Section~\ref{sec:compactness}. Besides displacements, whose compactness is actually trivial due to \ref{def:GSeps}, it will be crucial to keep track of a sort of cohesive debonded region, see \eqref{eq:Aepst}, in order to preserve irreversibility. Since no perimeter bounds are available for such sets, only weak compactness for their characteristic functions can be obtained by Helly's Selection Theorem, exploiting irreversibility. This allows us to first define an artificial limit debonded region, see \eqref{eq:A(t)}, and then to provide a connection between this limit set and the limit displacement, which are a priori unrelated. This is done in Proposition~\ref{prop:convu}. Section~\ref{sec:convergence} concludes the proof of our result by passing to the limit both \ref{def:GSeps} and \ref{def:EBeps}, finally showing that the limit displacement fulfils the equivalent reformulation previously introduced.
	
	\subsection{Preliminaries on shape energetic solutions}\label{sec:SES}
	
	We now present the characterization of shape energetic solutions via displacements, in the spirit of a free-boundary problem of Alt-Caffarelli type \cite{AltCaff}, developed in \cite{MagRivTol}. The underlying idea is that, due to maximum principle, the debonded region $A(t)$ should somehow correspond to the positivity set of the minimal displacement attaining the energy $\mc E(t,A(t))$. Taking into account irreversibility, this also justifies expression \eqref{eq:Aut}. 
	
	Next proposition, containing the aformentioned characterization, is obtained collecting some results from \cite{MagRivTol}, see in particular Propositions~2.5 and 3.3 and Remark 2.3 (recall that condition (2.10) therein is in force in our setting due to \eqref{eq:neighborhood}).
	
	\begin{prop}\label{prop:equiv}
		If a map $t\mapsto u(t)$ satisfies the following conditions, then the set $A_{u(t)}$ introduced in \eqref{eq:Aut} defines a shape energetic solution of the brittle debonding model.
		
		\begin{itemize}[leftmargin = !, labelwidth=1.2cm, align = left]
			\item [{\crtcrossreflabel{\textup{(CO)'}}[def:CO']}] $u(t)\in H^1_{\Gamma,w(t)}(\Omega)$ for all $t\in [0,T]$;
			\item[{\crtcrossreflabel{\textup{(ID)'}}[def:ID']}] $u(0)=\mathfrak{h}_{A_0,w(0)}$;
			\item[{\crtcrossreflabel{\textup{(GS)'}}[def:GS']}] for all $t\in [0,T]$ there holds
			\begin{equation*}
				\frac 12 \int_\Omega|\nabla u(t)|^2\d x\le \frac 12 \int_\Omega|\nabla v|^2\d x+\int_{\{v>0\}\setminus A_{u(t)}}\kappa\d x,\quad\text{for all $v\in H^1_{\Gamma, w(t)}(\Omega)$;}
			\end{equation*}
			\item[{\crtcrossreflabel{\textup{(EI)'}}[def:EI']}]  for all $t\in [0,T]$ there holds
			\begin{equation*}
				\frac 12 \int_\Omega|\nabla u(t)|^2\d x+\int_{ A_{u(t)}\setminus A_0}\kappa\d x\le\frac 12 \int_\Omega|\nabla u(0)|^2\d x+\int_0^t  \int_\Omega \nabla \dot w(\tau)\cdot \nabla u(\tau) \d x\d\tau.
			\end{equation*}
		\end{itemize}
	\end{prop}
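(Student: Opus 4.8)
The plan is to verify, for the set $A(t):=A_{u(t)}$ of \eqref{eq:Aut}, the five conditions \ref{def:COS}--\ref{def:EBS} of Definition~\ref{defi:SES}. The whole argument hinges on a preliminary identification: for every $t$ one has $u(t)\ge 0$ and $u(t)=\mathfrak{h}_{A_{u(t)},w(t)}$, whence $\mc E(t,A_{u(t)})=\frac12\int_\Omega|\nabla u(t)|^2\d x$. I would get nonnegativity by testing \ref{def:GS'} with $v=u(t)^+\in H^1_{\Gamma,w(t)}(\Omega)$: since $\{v>0\}=\{u(t)>0\}\subseteq A_{u(t)}$ the dissipation term drops, forcing $\int_{\{u(t)\le 0\}}|\nabla u(t)|^2\d x=0$, hence $\nabla u(t)^-=0$ a.e.; as $\Omega$ is connected and $u(t)^-=w(t)^-=0$ on $\Gamma$ this gives $u(t)\ge 0$. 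For the identification, testing \ref{def:GS'} with any competitor supported in $A_{u(t)}$ (such competitors exist, e.g.\ $w(t)$ itself, thanks to \eqref{eq:w0}) again kills the dissipation term and yields $\frac12\int_\Omega|\nabla u(t)|^2\d x\le\mc E(t,A_{u(t)})$, while $u(t)\ge 0$ and $\{u(t)>0\}\subseteq A_{u(t)}$ imply $u(t)\in H^1_{\Gamma,w(t)}(\Omega,A_{u(t)})$, giving the reverse inequality; uniqueness of the minimizer of \eqref{eq:E} then identifies $u(t)$ with $\mathfrak{h}_{A_{u(t)},w(t)}$.

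Granted this, \ref{def:IDS} and \ref{def:IRS} are immediate: by \ref{def:ID'}, $u(0)=\mathfrak{h}_{A_0,w(0)}$ vanishes a.e.\ outside $A_0$, so $A_{u(0)}=A_0$; and $A_{u(s)}\subseteq A_{u(t)}$ for $s\le t$ is built into the definition \eqref{eq:Aut}. For the global stability \ref{def:GSS}, fix $t$ and $B\supseteq A_{u(t)}$ measurable and test \ref{def:GS'} with $v:=\mathfrak{h}_{B,w(t)}$; by the maximum principle $v\ge 0$, so $\{v>0\}\subseteq B$ and $\{v>0\}\setminus A_{u(t)}\subseteq B\setminus A_{u(t)}$, and since $\kappa\ge 0$ the inequality \ref{def:GS'} becomes precisely $\mc E(t,A_{u(t)})\le\mc E(t,B)+\int_{B\setminus A_{u(t)}}\kappa\d x$, i.e.\ \eqref{eq:GS}. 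For the energy balance \ref{def:EBS}, the identifications and \ref{def:ID'} rewrite its right-hand side as $\frac12\int_\Omega|\nabla u(0)|^2\d x+\int_0^t\int_\Omega\nabla\dot w(\tau)\cdot\nabla u(\tau)\d x\d\tau$, so the ``$\le$'' inequality is exactly \ref{def:EI'}; note also that the uniform bound $\|\nabla u(t)\|_{L^2}\le C$ (test \ref{def:GS'} with $w(t)$) together with $\dot w\in L^1([0,T];H^1(\Omega))$ makes $t\mapsto\int_\Omega\nabla\dot w(t)\cdot\nabla\mathfrak{h}_{A(t),w(t)}\d x$ automatically integrable. For the reverse (lower energy) inequality I would run the classical scheme: for a partition $0=t_0<\dots<t_N=t$, test \ref{def:GS'} at $t_i$ with the shifted competitor $v=u(t_{i+1})+w(t_i)-w(t_{i+1})$; using \eqref{eq:w0} one checks $\{v>0\}\setminus A_{u(t_i)}\subseteq A_{u(t_{i+1})}\setminus A_{u(t_i)}$, so expanding the square, rearranging and summing over $i$ the left-hand sides telescope, the dissipation terms add up exactly to $\int_{A_{u(t)}\setminus A_0}\kappa\d x$ (by the nesting), and one is left with a Riemann sum $\sum_i\int_{t_i}^{t_{i+1}}\int_\Omega\nabla u(t_{i+1})\cdot\nabla\dot w(\tau)\d x\d\tau$ plus an error $\frac12\sum_i\|\nabla(w(t_{i+1})-w(t_i))\|_{L^2}^2$ which vanishes with the mesh since $w\in AC([0,T];H^1(\Omega))$.

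The step I expect to be the main obstacle is the passage to the limit in that Riemann sum, i.e.\ showing $\sum_i\int_{t_i}^{t_{i+1}}\int_\Omega\nabla u(t_{i+1})\cdot\nabla\dot w(\tau)\d x\d\tau\to\int_0^t\int_\Omega\nabla u(\tau)\cdot\nabla\dot w(\tau)\d x\d\tau$: this requires enough time-regularity of $u$, namely that $t\mapsto u(t)\in H^1(\Omega)$ is continuous at all but the (at most countably many) jump points of the nondecreasing map $t\mapsto\int_{A_{u(t)}\setminus A_0}\kappa\d x$. I would obtain this by sandwiching the increment $\int_\Omega|\nabla u(t)|^2\d x-\int_\Omega|\nabla u(s)|^2\d x$: testing \ref{def:GS'} at $t$ with $v=u(s)+w(t)-w(s)$ gives one bound (here $\{v>0\}\setminus A_{u(t)}=\emptyset$), testing \ref{def:GS'} at $s$ with $v=u(t)+w(s)-w(t)$ gives the other (up to $\int_{A_{u(t)}\setminus A_{u(s)}}\kappa\d x$), and the loading differences vanish as $s\to t$; combined with the monotonicity of the accumulated dissipation and lower semicontinuity of the Dirichlet energy this yields the desired continuity (and, a posteriori, strong $H^1$ convergence along the partition). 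The other genuinely technical ingredient is the openness \ref{def:COS}: here one invokes interior regularity for the Alt--Caffarelli-type functional \cite{AltCaff} that $u(t)$ minimizes, which furnishes a continuous representative whose positivity set is open, and then checks that the increasing union in \eqref{eq:Aut} is again open. Both this regularity and the time-continuity above are precisely what is imported from \cite{MagRivTol}.
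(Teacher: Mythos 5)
Your proposal necessarily takes a different route from the paper, because the paper does not prove Proposition~\ref{prop:equiv} at all: it imports it wholesale from \cite{MagRivTol} (Propositions 2.5 and 3.3 and Remark 2.3). What you do is reconstruct that argument, and most of it checks out: nonnegativity via the competitor $u(t)^+$, the identification $u(t)=\mathfrak{h}_{A_{u(t)},w(t)}$ so that $\mc E(t,A_{u(t)})=\frac12\int_\Omega|\nabla u(t)|^2\d x$, the derivation of \ref{def:GSS} by testing \ref{def:GS'} with $\mathfrak{h}_{B,w(t)}$, openness of $A_{u(t)}$ via the Lipschitz regularity contained in Lemma~\ref{lemma:equivalent}, and the telescoping scheme for the lower energy estimate, including the key inclusion $\{v>0\}\setminus A_{u(t_i)}\subseteq A_{u(t_{i+1})}\setminus A_{u(t_i)}$ guaranteed by \eqref{eq:w0}.

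The one step that does not follow as written is exactly the one you flag. Your sandwich argument gives convergence of the Dirichlet energies, $\int_\Omega|\nabla u(s)|^2\d x\to\int_\Omega|\nabla u(t)|^2\d x$ as $s\downarrow t$, at every right-continuity point of the monotone map $t\mapsto\int_{A_{u(t)}\setminus A_0}\kappa\d x$; but ``lower semicontinuity of the Dirichlet energy'' does not upgrade this to $u(s)\to u(t)$ in $H^1$ (not even weakly): you must first identify the weak limit, and this uses an ingredient you never invoke, namely $\kappa>0$ a.e.\ on $\Omega\setminus A_0$ from \eqref{eq:kappa}. Concretely, if $s_n\downarrow t$ and $u(s_n)\rightharpoonup\tilde u$ in $H^1(\Omega)$, then $\tilde u=0$ a.e.\ outside $\bigcap_n A_{u(s_n)}$, and the vanishing of the dissipation increment forces $\kappa=0$ a.e.\ on $\bigcap_n A_{u(s_n)}\setminus A_{u(t)}$, hence this set is Lebesgue-null; therefore $\tilde u\in H^1_{\Gamma,w(t)}(\Omega,A_{u(t)})$ with Dirichlet energy at most $\mc E(t,A_{u(t)})$, and uniqueness of the minimizer yields $\tilde u=u(t)$. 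Only after this identification do you get a.e.\ right-continuity of $u$, and then dominated convergence (with the uniform $H^1$ bound and $\|\dot w(\tau)\|_{H^1}\in L^1$) handles the Riemann sum. Alternatively, you can bypass any time-regularity of $u$ by choosing the partitions adapted to the $L^1$ integrand $\tau\mapsto\int_\Omega\nabla\dot w(\tau)\cdot\nabla u(\tau)\d x$, exactly as in conditions (1)--(4) of the second lemma of Appendix~\ref{app}; note also that the integrability (indeed, the measurability) of this map is not ``automatic'' from the uniform bound, but is implicitly presupposed by \ref{def:EI'} and should be acknowledged as such.
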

\begin{rmk}
	Observe that conditions \ref{def:GS'} and \ref{def:EI'} involve the set $A_{u(t)}$ itself. This encompasses irreversibility and makes this formulation non-trivial.
\end{rmk}

	We also borrow the following technical lemma, which will be crucial for the asymptotic analysis, in particular for the proof of Proposition~\ref{prop:GS'}. We refer to \cite[Lemma 1.4]{MagRivTol} for a proof.
	
	\begin{lemma} \label{lemma:equivalent}
		Let $\eta\in H^1(\Omega)^+$ and let $A\in \mathcal M(\Omega)$. For $u \in H^1_{\Gamma,\eta}(\Omega,A)$ the following are equivalent:
		\begin{enumerate}			
			\item $\displaystyle \frac 12\int_\Omega |\nabla u|^2 \, dx  \leq \frac 12\int_\Omega |\nabla v|^2 \, dx + \int_{\{v > 0\}\setminus A}\kappa\, dx,\qquad$ for all $v\in H^1_{\Gamma,\eta}(\Omega)$;
			\item $\displaystyle \frac 12\int_\Omega |\nabla u|^2 \, dx+\int_{A}\kappa\, dx  \leq \frac 12\int_\Omega |\nabla v|^2 \, dx + \int_{\{v > 0\}}\kappa\, dx,\qquad$ for all $v\in H^1_{\Gamma,\eta}(\Omega)^+\cap C^0(\Omega)$ satisfying $A\subseteq \{v>0\}$.
		\end{enumerate}
		If one of the above holds, then $u=\mathfrak{h}_{A,\eta}$. Moreover, $u$ is locally Lipschitz continuous in $\Omega$ and it is harmonic in the open set $\{u>0\}$.
	\end{lemma}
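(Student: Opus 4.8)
The plan is to prove the two inequalities separately; the first is immediate and the second carries all the weight. Throughout I use that $u\in H^1_{\Gamma,\eta}(\Omega,A)$ vanishes outside $A$, so $\{u>0\}\subseteq A$ and $\int_{\{u>0\}\setminus A}\kappa\d x=0$. For $(1)\Rightarrow(2)$: given $v\in H^1_{\Gamma,\eta}(\Omega)^+\cap C^0(\Omega)$ with $A\subseteq\{v>0\}$, split $\int_{\{v>0\}}\kappa\d x=\int_{\{v>0\}\setminus A}\kappa\d x+\int_A\kappa\d x$ and add $\int_A\kappa\d x$ to both sides of the inequality in (1); since $v$ is in particular in $H^1_{\Gamma,\eta}(\Omega)$, the result is exactly (2), and no continuity of $v$ is used here.

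The implication $(2)\Rightarrow(1)$ is the core. Fix $v\in H^1_{\Gamma,\eta}(\Omega)$; since $\eta\ge 0$, replacing $v$ by $v^+$ does not increase $\int_\Omega|\nabla v|^2\d x$ nor alter $\{v>0\}$ or the trace on $\Gamma$, so we may assume $v\ge 0$. The idea is to feed into (2) a sequence $v_n\in H^1_{\Gamma,\eta}(\Omega)^+\cap C^0(\Omega)$ with $A\subseteq\{v_n>0\}$ such that
\begin{equation*}
\frac12\int_\Omega|\nabla v_n|^2\d x+\int_{\{v_n>0\}}\kappa\d x\ \longrightarrow\ \frac12\int_\Omega|\nabla v|^2\d x+\int_{\{v>0\}\cup A}\kappa\d x ;
\end{equation*}
letting $n\to\infty$ in (2) and using $\int_A\kappa\d x=\int_{\{v>0\}\cup A}\kappa\d x-\int_{\{v>0\}\setminus A}\kappa\d x$ then leaves precisely (1). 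I would build $v_n$ in two steps. First, a Lipschitz-truncation argument produces $w_n\in H^1_{\Gamma,\eta}(\Omega)^+\cap C^0(\Omega)$ with $w_n\to v$ in $H^1(\Omega)$ and $|\{w_n\ne v\}|\to 0$; since $\{w_n>0\}\,\triangle\,\{v>0\}\subseteq\{w_n\ne v\}$ and $\kappa\in L^\infty$, this gives $\int_{\{w_n>0\}}\kappa\d x\to\int_{\{v>0\}}\kappa\d x$ (and of course $\int_\Omega|\nabla w_n|^2\d x\to\int_\Omega|\nabla v|^2\d x$); the trace on $\Gamma$ is retained by a routine localization near $\Gamma$, harmless because $\kappa$ is immaterial there by \eqref{eq:neighborhood}. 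Second, pick open sets $O_n$ with $A\subseteq O_n\subseteq\Omega$ and $|O_n\setminus A|\to 0$, and set $g_n:=1\wedge\big(\tfrac1n\,\mathrm{dist}(\cdot,\Gamma\cup(\Omega\setminus O_n))\big)$: these are continuous, nonnegative, vanish on $\Gamma$, are positive exactly on $O_n\supseteq A$, and obey $\|\nabla g_n\|_{L^2(\Omega)}^2\le|\Omega|/n^2\to 0$. Then $v_n:=w_n\vee g_n$ is continuous and nonnegative, equals $\eta$ on $\Gamma$ (as $\eta\ge 0$), has $\{v_n>0\}=\{w_n>0\}\cup O_n$ whose symmetric difference with $\{v>0\}\cup A$ is negligible (so the $\kappa$-term converges as required), and $\int_\Omega|\nabla v_n|^2\d x\le\int_\Omega|\nabla w_n|^2\d x+\|\nabla g_n\|_{L^2(\Omega)}^2\to\int_\Omega|\nabla v|^2\d x$; this is the desired sequence.

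Hence (1) and (2) are equivalent, and it remains to derive from (1) that $u=\mathfrak{h}_{A,\eta}$ with the stated regularity. Testing (1) with the competitor $v=\mathfrak{h}_{A,\eta}\in H^1_{\Gamma,\eta}(\Omega,A)$, whose positivity set lies in $A$ so that the penalty vanishes, gives $\frac12\int_\Omega|\nabla u|^2\d x\le\frac12\int_\Omega|\nabla \mathfrak{h}_{A,\eta}|^2\d x$; since $\mathfrak{h}_{A,\eta}$ is the unique minimizer of the Dirichlet energy in $H^1_{\Gamma,\eta}(\Omega,A)$ and $u$ belongs to that space, $u=\mathfrak{h}_{A,\eta}$. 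Moreover (1) says exactly that $u$ minimizes over $H^1_{\Gamma,\eta}(\Omega)$ the Alt--Caffarelli functional $v\mapsto\frac12\int_\Omega|\nabla v|^2\d x+\int_{\{v>0\}}\tilde\kappa\d x$ with bounded coefficient $\tilde\kappa:=\kappa\,\chi_{\Omega\setminus A}\in L^\infty(\Omega)^+$ (indeed this functional equals $\frac12\int_\Omega|\nabla u|^2\d x$ at $u$, and $\frac12\int_\Omega|\nabla v|^2\d x+\int_{\{v>0\}\setminus A}\kappa\d x$ at a general $v$), so the local Lipschitz continuity in $\Omega$ and the harmonicity in $\{u>0\}$ follow from the standard interior regularity theory for such minimizers, see \cite{AltCaff}. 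Harmonicity in the open positivity set can also be seen directly by inserting $v=u\pm t\phi$, $\phi\in C^\infty_c(\Omega)$ supported where $u\ge c>0$ and $t>0$ small, into (1), which leaves $\{v>0\}$ and the penalty unchanged.

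The main obstacle is the construction in the second paragraph. Unlike the Dirichlet term, the volume term $\int_{\{v>0\}}\kappa\d x$ is not continuous along the $H^1$-approximations produced by mollification, which enlarge positivity sets uncontrollably; one has to approximate $v$ in $H^1$ by a continuous function differing from it only on a set of small measure (so its positivity set is essentially preserved) and, at the same time, inflate that positivity set just enough to contain $A$ at asymptotically vanishing Dirichlet cost. This is precisely what the Lipschitz truncation and the distance-type bump accomplish; once they, and the localization near $\Gamma$, are in place, everything else is bookkeeping.
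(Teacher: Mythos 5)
First, a caveat: the paper does not prove this lemma itself but defers to \cite[Lemma 1.4]{MagRivTol}, so there is no in-paper proof to compare against line by line. On its merits, your implication $(1)\Rightarrow(2)$ is correct and trivial, and your derivation of the final assertions from (1) is sound: testing with $\mathfrak{h}_{A,\eta}$ identifies $u$ by uniqueness of the Dirichlet minimizer, (1) exhibits $u$ as an Alt--Caffarelli minimizer with bounded coefficient $\kappa\chi_{\Omega\setminus A}$ (only the upper bound on the coefficient is needed for local Lipschitz continuity), and the inner-variation argument for harmonicity works once continuity makes $\{u>0\}$ open. The reduction to $v\ge 0$ and the bump $g_n$ are also fine.

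The genuine gap is in the first step of your construction for $(2)\Rightarrow(1)$: producing $w_n\in H^1_{\Gamma,\eta}(\Omega)^+\cap C^0(\Omega)$ with $w_n\to v$ in $H^1$ and $|\{w_n\neq v\}|\to 0$. The Lipschitz truncation does not preserve the trace on $\Gamma$, and the ``routine localization near $\Gamma$'' you invoke does not exist in the form you suggest. The vanishing of $\kappa$ near $\Gamma$ (via \eqref{eq:neighborhood}) only neutralizes the $\kappa$-term there; it does nothing for the two constraints that actually bite. Concretely: the natural gluing $\theta v+(1-\theta)w_n$ (with $\theta=1$ near $\Gamma$) restores the trace and keeps the energy, but is \emph{not} continuous in $\Omega$, since $v$ and $\eta$ are merely $H^1$; conversely, replacing $v$ near $\Gamma$ by a continuous-in-$\Omega$ representative with the right trace (e.g.\ a harmonic extension) restores continuity but changes the Dirichlet energy by a non-vanishing amount, because the replacement is not $H^1$-close to $v$ in the transition region. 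A correct repair is to split $v=h+(v-h)$, where $h$ is a harmonic extension of the boundary datum (continuous in the open set $\Omega$, trace $\eta$ on $\Gamma$), and apply a \emph{trace-preserving} Lipschitz truncation to $v-h\in H^1_{\Gamma,0}(\Omega)$; then $h+\phi_n$ is continuous, has the right trace, and still satisfies $|\{h+\phi_n\neq v\}|\to 0$. Alternatively --- and this is likely closer to the cited proof --- one can avoid approximating arbitrary $v$ altogether: take a minimizer $\bar u$ of $v\mapsto\frac12\int_\Omega|\nabla v|^2\d x+\int_{\{v>0\}\setminus A}\kappa\d x$ over $H^1_{\Gamma,\eta}(\Omega)$, which is continuous by Alt--Caffarelli regularity and automatically has the correct trace, insert $\bar u\vee g_n$ into (2), and conclude that $\frac12\int_\Omega|\nabla u|^2\d x$ does not exceed the minimum, which is exactly (1). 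As written, your proof of the key implication is incomplete at precisely the point where the lemma's content lies.
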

	
	\subsection{Compactness}\label{sec:compactness}
	
	Although both considered notions of solution are now formulated in term of displacements, the cohesive history variable $\gamma^\eps$ is still the crucial ingredient in order to describe the debonded set. Indeed, it is the only element in the cohesive framework encompassing irreversibility. In order to clarify the expression of the $\eps$-debonded set $A^\eps(t)$ below, let us assume for a moment that $\Phi(x,\cdot,\cdot)$ is definitely constant, namely
	\begin{equation*}
		\Phi(x,y,z)=\kappa_\Phi(x),\quad\text{if }y\vee z\ge \delta(x),
	\end{equation*}
	for some positive threshold $\delta(x)>0$. Accordingly to the cohesive interpretation, the adhesion at a point $x$ is completely broken just when the displacement at that point has overcomed $\delta(x)$ at least once. Due to the rescaling, it is thus natural to describe the debonded region at time $t$ as the set $\{x\in \Omega:\, \gamma^\eps(t,x)\ge \eps\delta(x)\}$.
	
	In the general case, when $\kappa_\Phi$ may be just reached asymptotically, in order to mimic the previous consideration we pick an arbitrary sequence $\delta_\eps$ satisfying
	\begin{equation}\label{eq:deltaeps}
		\lim\limits_{\eps\to 0}\delta_\eps=+\infty,\qquad \lim\limits_{\eps\to 0}\eps\delta_\eps=0,
	\end{equation}
and for $t\in [0,T]$ we set
\begin{equation}\label{eq:Aepst}
	A^\eps(t):=A_0\cup\{\gamma^\eps(t)\ge \eps\delta_\eps\}.
\end{equation}

Observe that the map $t\mapsto A^\eps(t)$ is nondecreasing with respect to inclusion, due to \ref{def:IReps}, and that clearly $A_0\subseteq A^\eps(0)$. This is enough to infer weak compactness for the sequence of characteristic functions $\chi_{A^\eps(t)}$.

\begin{prop}\label{prop:convA}
	There exists a (non relabelled) subsequence and a map $t\mapsto \rho(t)$ such that for all $t\in [0,T]$ there holds
	\begin{equation}\label{eq:convrho}
		\chi_{A^\eps(t)}\xrightarrow[\eps\to 0]{}\rho(t)\qquad \text{weakly$^*$ in }L^\infty(\Omega).
	\end{equation}
In particular, $\rho$ is nondecreasing in time, $0\le\rho(t)\le 1$ for all $t\in [0,T]$ and $\rho(0)=1$ on $A_0$.
\end{prop}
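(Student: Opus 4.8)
The plan is to run a Helly-type selection argument, the only genuine subtlety being that weak* convergence is required \emph{simultaneously for all} $t\in[0,T]$, and not merely for a.e.\ $t$ or at continuity points of the limit. The structural facts we rely on are: first, by irreversibility \ref{def:IReps} we have $\gamma^\eps(s)\le\gamma^\eps(t)$ for $s\le t$, hence $\{\gamma^\eps(s)\ge\eps\delta_\eps\}\subseteq\{\gamma^\eps(t)\ge\eps\delta_\eps\}$, so the map $t\mapsto A^\eps(t)$ in \eqref{eq:Aepst} is nondecreasing with respect to inclusion and $\chi_{A^\eps(s)}\le\chi_{A^\eps(t)}$ a.e.\ whenever $s\le t$; second, $0\le\chi_{A^\eps(t)}\le1$ for every $t$ and $\eps$, with $\chi_{A^\eps(t)}\equiv1$ on $A_0$ since $A_0\subseteq A^\eps(t)$. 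To repair the everywhere-in-$t$ issue, instead of extracting subsequences directly in $L^\infty(\Omega)$ I would reduce to scalar nondecreasing functions, for which Helly's selection theorem gives pointwise convergence at \emph{every} point.

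Concretely, fix a countable set $\{\phi_k\}_{k\in\N}\subseteq L^1(\Omega)^+$ whose linear span is dense in $L^1(\Omega)$. For every $k$ and every $\eps$ the real function $g_k^\eps(t):=\int_\Omega\chi_{A^\eps(t)}\phi_k\,dx$ is nondecreasing on $[0,T]$ and bounded by $\|\phi_k\|_{L^1(\Omega)}$, uniformly in $\eps$. Applying Helly's selection theorem for each $k$ together with a diagonal procedure, I extract a single (non relabelled) subsequence along which $g_k^\eps(t)\to g_k(t)$ for every $t\in[0,T]$ and every $k\in\N$.

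Along this subsequence, for each fixed $t$ the family $\{\chi_{A^\eps(t)}\}_\eps$ is bounded in $L^\infty(\Omega)=(L^1(\Omega))^*$, hence weak* precompact; since it converges when tested against every element of the dense set $\{\phi_k\}$, a standard density argument forces the whole (sub)sequence to converge weak* to a unique $\rho(t)\in L^\infty(\Omega)$, which is exactly \eqref{eq:convrho}. The remaining properties are then immediate: passing to the weak* limit in $0\le\chi_{A^\eps(t)}\le1$ gives $0\le\rho(t)\le1$; testing $\chi_{A^\eps(s)}\le\chi_{A^\eps(t)}$ against arbitrary nonnegative $L^1$ functions and passing to the limit gives $\rho(s)\le\rho(t)$ for $s\le t$; and $\chi_{A^\eps(0)}\equiv1$ on $A_0$ yields $\rho(0)=1$ on $A_0$.

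\emph{Main obstacle:} the "for all $t$" statement. A naive weak* extraction yields a subsequence and a limit only for each individual time, and gluing these over $t$ loses everywhere-convergence precisely at the (at most countably many) jump times of the monotone evolution. Routing the argument through the scalar monotone functions $g_k^\eps$ and invoking Helly's theorem is what fixes this, at the minor cost of verifying that convergence against a countable dense subset of $L^1(\Omega)$ upgrades to full weak* convergence in $L^\infty(\Omega)$ thanks to the uniform bound.
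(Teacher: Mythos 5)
Your proof is correct. It rests on the same two pillars as the paper's argument (time-monotonicity of $t\mapsto A^\eps(t)$ coming from \ref{def:IReps}, and Helly's selection theorem combined with a diagonal extraction), but the implementation is genuinely different: you scalarize the problem by pairing $\chi_{A^\eps(t)}$ with a countable family $\{\phi_k\}\subseteq L^1(\Omega)^+$ whose span is dense in $L^1(\Omega)$, apply only the classical scalar Helly theorem to the nondecreasing functions $g_k^\eps$, and then recover weak$^*$ convergence in $L^\infty(\Omega)$ for every fixed $t$ directly from the uniform bound $0\le\chi_{A^\eps(t)}\le1$ and the density of $\mathrm{span}\{\phi_k\}$, identifying $\rho(t)$ via the duality $L^\infty(\Omega)=(L^1(\Omega))^*$. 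The paper instead invokes a Banach-space-valued Helly theorem, using the bound in $BV([0,T];L^1(\Omega))$ and the reflexivity of $L^2(\Omega)$ to get weak $L^2(\Omega)$ convergence simultaneously for all $t$, and only afterwards upgrades to weak$^*$ $L^\infty(\Omega)$ by uniqueness of the limit along further subsequences. Your route is more elementary and self-contained (no vector-valued Helly statement, no detour through $L^2$ and reflexivity), at the modest cost of writing out the density/uniform-bound upgrade; the paper's route is shorter on the page because it outsources exactly that step to a known generalization of Helly's theorem. Your treatment of the final properties of $\rho$ (bounds, monotonicity in $t$, $\rho(0)=1$ on $A_0$ since $A_0\subseteq A^\eps(0)$) matches the paper's and is complete.
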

\begin{proof}
	 The functions $\chi_{A^\eps(t)}$ are clearly bounded in $L^2(\Omega)$, as they take values in $\{0,1\}$. Moreover, by their time-monotonicity with respect to inclusion, they are also bounded in $BV([0,T];L^1(\Omega))$. Due to the reflexivity of $L^2(\Omega)$, by means of Helly's Selection Theorem together with a diagonal argument we conclude that, along a non-relabelled subsequence, there holds
	\begin{equation*}
		\chi_{A^{\eps}(t)} \xrightharpoonup[\eps\to 0]{} \rho(t) \quad \text{weakly in } L^2(\Omega) \quad \text{for all } \, t \in [0, T] \,.
	\end{equation*}

	Moreover, for any $t\in[0,T]$ the sequence $\chi_{A^\eps(t)}$ is also trivially bounded in $L^\infty(\Omega)$. Thus, from any subsequence we can extract a further subsequence that weakly$^*$ converges in $L^\infty(\Omega)$.
	Since the limit is uniquely identified and coincides with $ \rho(t) $, the desired convergence \eqref{eq:convrho} holds without a further extraction. 
	
	The stated properties on $\rho$ directly follow by weak$^*$ convergence from the fact that $\chi_{A^\eps(t)}$ is monotone and bounded between $0$ and $1$, and recalling that $A_0\subseteq A^\eps(0)$. 
\end{proof}

Although weak$^*$ convergence does not preserve characteristic functions, whence we do not directly have a limit debonded region, we can anyway identify an artificial one defining
\begin{equation}\label{eq:A(t)}
	A(t):=\{\rho(t)=1\}.
\end{equation}
Notice that $t\mapsto A(t)$ is nondecreasing with respect to inclusion, since $\rho$ is nondecreasing in time, and that $A_0\subseteq A(0)$.

Next proposition, besides an immediate compactness result for displacements, mainly provides the crucial link \eqref{eq:inclusion} between \eqref{eq:A(t)} and the limit displacement.

\begin{prop}\label{prop:convu}
	For all $t\in [0,T]$ there exist a further subsequence $\eps_n$ (possibly depending on $t$) and a function $u(t)\in H^1_{\Gamma,w(t)}(\Omega)^+\cap L^\infty(\Omega)$ such that
	\begin{equation}\label{eq:weakconv}
		u^{\eps_n}(t)\xrightarrow[n\to +\infty]{}u(t)\quad\text{ weakly in $H^1(\Omega)$ and weakly$^*$ in }L^\infty(\Omega).
	\end{equation}
In particular, $u(t)$ satisfies \ref{def:CO'}.

	Furthermore, for all $t\in [0,T]$ one has
	\begin{equation}\label{eq:inclusion}
		\{u(t)>0\}\subseteq A(t),
	\end{equation}
whence we deduce
\begin{equation}\label{eq:inclA}
	A_{u(t)}\subseteq A(t),\quad\text{ for all }t\in [0,T].
\end{equation}
\end{prop}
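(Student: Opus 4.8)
The plan is to produce the limit displacement by a routine weak compactness argument, and then to extract the inclusion \eqref{eq:inclusion} from an almost-everywhere comparison of $u^\eps(t)$ against the vanishing threshold $\eps\delta_\eps$.

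\emph{Compactness and \ref{def:CO'}.} First, fix $t\in[0,T]$. By Proposition~\ref{prop:propertiesGES} applied to the rescaled solution — note that the constant $K$ from \ref{hyp:phi2} and the constant $M$ from \eqref{eq:w} are independent of $\eps$, so the bounds \eqref{eq:bounds} are uniform in $\eps$ — the family $\{u^\eps(t)\}_\eps$ is bounded in $H^1(\Omega)$ and in $L^\infty(\Omega)$, and by \ref{def:COeps} each $u^\eps(t)$ belongs to $H^1_{\Gamma,w(t)}(\Omega)^+$. Hence I would extract a (further, possibly $t$-dependent) subsequence $\eps_n$ with $u^{\eps_n}(t)\rightharpoonup u(t)$ weakly in $H^1(\Omega)$ and weakly$^*$ in $L^\infty(\Omega)$; since $H^1_{\Gamma,w(t)}(\Omega)$ is a closed affine subspace and the nonnegative cone is convex and closed, both constraints survive in the limit, giving $u(t)\in H^1_{\Gamma,w(t)}(\Omega)^+\cap L^\infty(\Omega)$, i.e. \eqref{eq:weakconv} and \ref{def:CO'}. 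Being a subsequence of the one fixed in Proposition~\ref{prop:convA}, along $\eps_n$ we still have $\chi_{A^{\eps_n}(t)}\rightharpoonup\rho(t)$ weakly$^*$ in $L^\infty(\Omega)$.

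\emph{The inclusion \eqref{eq:inclusion}.} The key step is the following. By Rellich's theorem $u^{\eps_n}(t)\to u(t)$ strongly in $L^2(\Omega)$, so I would pass to a further (non relabelled) subsequence along which $u^{\eps_n}(t)\to u(t)$ a.e. in $\Omega$. At a.e. $x\in\{u(t)>0\}$ one then has $u^{\eps_n}(t,x)\to u(t,x)>0$ while $\eps_n\delta_{\eps_n}\to0$ by \eqref{eq:deltaeps}, hence $u^{\eps_n}(t,x)>\eps_n\delta_{\eps_n}$ for $n$ large; using $\gamma^{\eps_n}(t)\ge u^{\eps_n}(t)$ from \ref{def:GSeps} this gives $\gamma^{\eps_n}(t,x)\ge\eps_n\delta_{\eps_n}$, so $x\in A^{\eps_n}(t)$ by \eqref{eq:Aepst}. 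Thus $\chi_{A^{\eps_n}(t)}\to1$ a.e. on $\{u(t)>0\}$, and testing $\chi_{A^{\eps_n}(t)}\rightharpoonup\rho(t)$ against an arbitrary $\phi\in L^1(\Omega)^+$ supported in $\{u(t)>0\}$ — using dominated convergence on the left — yields $\int_\Omega\rho(t)\phi\,dx=\int_\Omega\phi\,dx$; as $0\le\rho(t)\le1$ by Proposition~\ref{prop:convA}, this forces $\rho(t)=1$ a.e. on $\{u(t)>0\}$, that is \eqref{eq:inclusion}. Finally, \eqref{eq:inclA} is immediate: for $s\in[0,t]$, \eqref{eq:inclusion} at time $s$ together with the monotonicity of $A(\cdot)$ gives $\{u(s)>0\}\subseteq A(s)\subseteq A(t)$, and $A_0\subseteq A(0)\subseteq A(t)$, so the union \eqref{eq:Aut} defining $A_{u(t)}$ is contained in $A(t)$.

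I expect the delicate point to be \eqref{eq:inclusion}: since only weak compactness is available for $u^\eps(t)$ and characteristic functions are not stable under weak$^*$ convergence, $\rho(t)$ cannot be identified with $\chi_{A(t)}$; one has to combine Rellich compactness with the scale separation $\eps_n\delta_{\eps_n}\to0$ and the constraint $\gamma^{\eps_n}(t)\ge u^{\eps_n}(t)$ to pin down $\rho(t)=1$ on a full-measure subset of $\{u(t)>0\}$ — which, being only one-sided, is precisely what \eqref{eq:inclusion} asks for. A minor point to keep in mind is that the subsequence here depends on $t$, so \eqref{eq:inclA} must be read through the $\eps$-independent sets $A(s)$ rather than through any uniform displacement subsequence.
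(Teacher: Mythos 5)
Your proposal is correct and follows essentially the same route as the paper: uniform bounds from Proposition~\ref{prop:propertiesGES} give the weak limits, Rellich plus a further subsequence give a.e.\ convergence, and the combination of $\gamma^{\eps_n}(t)\ge u^{\eps_n}(t)$ with $\eps_n\delta_{\eps_n}\to0$ shows $\chi_{A^{\eps_n}(t)}\to1$ a.e.\ on $\{u(t)>0\}$, whence $\rho(t)=1$ there and \eqref{eq:inclusion}, \eqref{eq:inclA} follow. The only cosmetic difference is that you test the weak$^*$ convergence against general $\phi\in L^1(\Omega)^+$ supported in $\{u(t)>0\}$, whereas the paper simply integrates over $\{u(t)>0\}$; the content is identical, and your closing remark about reading \eqref{eq:inclA} through the $\eps$-independent sets $A(s)$ is exactly the right way to handle the $t$-dependent subsequences.
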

\begin{proof}
	The weak convergence stated in \eqref{eq:weakconv} immediately follows from the uniform bounds \eqref{eq:bounds}. Notice indeed that the involved constants are independent of $\eps$. In particular, $u(t)$ belongs to $ H^1_{\Gamma,w(t)}(\Omega)^+\cap L^\infty(\Omega)$. Moreover, by the compact embedding of $H^1(\Omega)$ into $L^2(\Omega)$, without loss of generality we can assume that $u^{\eps_n}(t)$ pointwise converges to $u(t)$ almost everywhere in $\Omega$.
	
	In order to prove \eqref{eq:inclusion}, we fix $t\in [0,T]$ and we pick $x\in \{u(t)>0\}$ such that $\lim\limits_{n\to +\infty} u^{\eps_n}(t,x)=u(t,x)$. Thus, recalling \eqref{eq:deltaeps}, there exists $\bar n=\bar n(t,x)\in \N$ such that for all $n\ge \bar n$ one has
	\begin{align*}
		u(t,x)&\ge 2\eps_n\delta_{\eps_n},\\
		u^{\eps_n}(t,x)-u(t,x)&\ge -\eps_n\delta_{\eps_n}.
	\end{align*} 
This yields
\begin{equation*}
	\gamma^{\eps_n}(t,x)\ge u^{\eps_n}(t,x)=u(t,x)+u^{\eps_n}(t,x)-u(t,x)\ge \eps_n\delta_{\eps_n},\quad\text{for }n\ge \bar n,
\end{equation*}
namely $x\in A^{\eps_n}(t)$ for $n\ge \bar n$. We have thus proved that $\lim\limits_{n\to +\infty}\chi_{A^{\eps_n}(t)}(x)=1$ for almost every $x\in \{u(t)>0\}$. Hence, by first exploiting \eqref{eq:convrho} and then using Dominated Convergence Theorem we obtain
\begin{equation*}
	\int_{\{u(t)>0\}}\rho(t)\d x=\lim\limits_{n\to +\infty}\int_{\{u(t)>0\}}\chi_{A^{\eps_n}(t)}(x)\d x= |\{u(t)>0\}|.
\end{equation*} 
Since $0\le \rho(t)\le 1$, the above equality implies that $\rho(t)=1$ almost everywhere in $\{u(t)>0\}$, and so we conclude by definition \eqref{eq:A(t)} of $A(t)$.
\end{proof}
	
	\subsection{Asymptotic analysis}\label{sec:convergence}
	
	In this final section we conclude the proof of Theorem~\ref{thm:main} by showing that the limit displacement obtained in Proposition~\ref{prop:convu} fulfils all the conditions stated in Proposition~\ref{prop:equiv}. This first proposition deals with the global stability condition.
	
	\begin{prop}\label{prop:GS'}
		The limit displacement $u(t)$ obtained in Proposition~\ref{prop:convu} satisfies \ref{def:GS'} and can be characterized by $u(t)=\mathfrak{h}_{A(t),w(t)}$ for all $t\in [0,T]$. In particular, the convergence stated in \eqref{eq:weakconv} holds for the whole subsequence found in Proposition~\ref{prop:convA}.
		
		Furthermore, there actually holds $A(0)=A_0$, whence \ref{def:ID'} is fulfilled.
	\end{prop}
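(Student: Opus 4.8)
The plan is to verify, for each $t$, that the limit displacement $u(t)$ satisfies the two items of Lemma~\ref{lemma:equivalent} with $A=A(t)$ and $\eta=w(t)$, and then to read off \ref{def:GS'} by enlarging the dissipation set via the inclusion \eqref{eq:inclA}. The membership $u(t)\in H^1_{\Gamma,w(t)}(\Omega,A(t))$ is immediate: by Proposition~\ref{prop:convu} one has $u(t)\ge 0$ and $\{u(t)>0\}\subseteq A(t)$, hence $u(t)=0$ on $\Omega\setminus A(t)$. The substance of the argument is therefore to obtain item (2) of Lemma~\ref{lemma:equivalent} by sending $\eps\to 0$ in \ref{def:GSeps}.

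To this end I would fix $t$ and a competitor $v\in H^1_{\Gamma,w(t)}(\Omega)^+$ with $A(t)\subseteq\{v>0\}$ — precisely the class of test functions of item (2). On $\{v=0\}$ the cohesive density of the competitor is $\Phi(x,0,\gamma^\eps(t)/\eps)$, which by \ref{hyp:phi5} is dominated by $\Phi(x,u^\eps(t)/\eps,\gamma^\eps(t)/\eps)$; subtracting this common contribution over $\{v=0\}$ from both sides of \ref{def:GSeps} reduces it to
\[
\frac 12 \int_\Omega|\nabla u^\eps(t)|^2\,\mathrm d x+\int_{\{v>0\}}\Phi\!\left(x,\frac{u^\eps(t)}{\eps},\frac{\gamma^\eps(t)}{\eps}\right)\mathrm d x\le \frac 12 \int_\Omega|\nabla v|^2\,\mathrm d x+\int_{\{v>0\}}\Phi\!\left(x,\frac{v}{\eps},\frac{\gamma^\eps(t)}{\eps}\right)\mathrm d x.
\]
On the right-hand side $v(x)/\eps\to+\infty$ for a.e. $x\in\{v>0\}$, so by \ref{hyp:phi4}, \ref{hyp:phi7} and dominated convergence (\ref{hyp:phi2}) that integral converges to $\int_{\{v>0\}}\kappa_\Phi\,\mathrm d x$. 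On the left-hand side the Dirichlet term is weakly lower semicontinuous along \eqref{eq:weakconv}, while for the cohesive term I would restrict to $\{v>0\}\cap(A^\eps(t)\setminus A_0)$, where $\gamma^\eps(t)/\eps\ge\delta_\eps$ by \eqref{eq:Aepst}, so that \ref{hyp:phi5} yields $\Phi(x,u^\eps(t)/\eps,\gamma^\eps(t)/\eps)\ge\Phi(x,0,\delta_\eps)$ and hence the lower bound $\int_\Omega \chi_{\{v>0\}}\,\chi_{A^\eps(t)}\,\chi_{\Omega\setminus A_0}\,\Phi(x,0,\delta_\eps)\,\mathrm d x$. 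Since $\Phi(\cdot,0,\delta_\eps)\chi_{\Omega\setminus A_0}\to\kappa_\Phi$ strongly in $L^1(\Omega)$ (by \ref{hyp:phi2}, \ref{hyp:phi7}, \eqref{eq:deltaeps}, and $\kappa_\Phi=0$ on $A_0$) while $\chi_{A^\eps(t)}\rightharpoonup\rho(t)$ weakly$^*$ in $L^\infty(\Omega)$ by Proposition~\ref{prop:convA}, the product passes to the limit and gives $\int_{\{v>0\}}\rho(t)\kappa_\Phi\,\mathrm d x\ge\int_{A(t)}\kappa_\Phi\,\mathrm d x$, using $A(t)\subseteq\{v>0\}$ and $\rho(t)=1$ on $A(t)$. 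Combining these estimates produces $\tfrac12\int_\Omega|\nabla u(t)|^2\,\mathrm d x+\int_{A(t)}\kappa_\Phi\,\mathrm d x\le\tfrac12\int_\Omega|\nabla v|^2\,\mathrm d x+\int_{\{v>0\}}\kappa_\Phi\,\mathrm d x$, which is exactly item (2) of Lemma~\ref{lemma:equivalent}.

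From Lemma~\ref{lemma:equivalent} I then get at once that $u(t)=\mathfrak h_{A(t),w(t)}$ and that item (1) holds, i.e. $\tfrac12\int_\Omega|\nabla u(t)|^2\,\mathrm d x\le\tfrac12\int_\Omega|\nabla v|^2\,\mathrm d x+\int_{\{v>0\}\setminus A(t)}\kappa_\Phi\,\mathrm d x$ for every $v\in H^1_{\Gamma,w(t)}(\Omega)$; since $A_{u(t)}\subseteq A(t)$ by \eqref{eq:inclA}, this is \ref{def:GS'}. As $\mathfrak h_{A(t),w(t)}$ is uniquely determined by $A(t)$, every subsequence of $(u^\eps(t))$ (inside the one of Proposition~\ref{prop:convA}) has a further subsequence converging to it, so the whole such subsequence converges, which gives the second assertion. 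For the last claim I would run the lower bound of the previous paragraph at $t=0$ directly on $\int_\Omega\Phi(x,u_0^\eps/\eps,\gamma_0^\eps/\eps)\,\mathrm d x\ge\int_\Omega\chi_{A^\eps(0)}\chi_{\Omega\setminus A_0}\Phi(x,0,\delta_\eps)\,\mathrm d x$, whose right-hand side tends to $\int_{\Omega\setminus A_0}\rho(0)\kappa_\Phi\,\mathrm d x$; the well-preparedness \eqref{eq:wellprepared} forces this limit to vanish, and since $\kappa_\Phi>0$ on $\Omega\setminus A_0$ we get $\rho(0)=0$ there. Together with $\rho(0)=1$ on $A_0$ this gives $\rho(0)=\chi_{A_0}$, hence $A(0)=A_0$ by \eqref{eq:A(t)}, and \ref{def:ID'} follows from $u(0)=\mathfrak h_{A_0,w(0)}$.

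The step I expect to be the main obstacle is the sharp lower bound $\liminf_{\eps\to0}\int_\Omega\Phi(x,u^\eps(t)/\eps,\gamma^\eps(t)/\eps)\,\mathrm d x\ge\int_{A(t)}\kappa_\Phi\,\mathrm d x$: only weak$^*$ compactness of $\chi_{A^\eps(t)}$ is available (no perimeter bounds), so one cannot argue pointwise and must instead pair $\chi_{A^\eps(t)}$ against the strongly $L^1$-convergent truncation $\Phi(\cdot,0,\delta_\eps)$ — which is the precise place where assumption \ref{hyp:phi7} is essential, since it is what guarantees $\Phi(x,0,\delta_\eps)\to\kappa_\Phi(x)$ as $\delta_\eps\to\infty$ — and one has to arrange the absorption of the $\{v=0\}$-contribution in \ref{def:GSeps} so that it is compatible with the competitor class of Lemma~\ref{lemma:equivalent}.
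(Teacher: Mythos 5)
Your proof is correct and takes essentially the same route as the paper: you pass to the limit in \ref{def:GSeps} with competitors positive on $A(t)$, pair $\chi_{A^\eps(t)}$ (weak$^*$ in $L^\infty$) against the strongly $L^1$-convergent truncation $\Phi(\cdot,0,\delta_\eps)$ to recover $\int_{A(t)}\kappa_\Phi$, apply Lemma~\ref{lemma:equivalent} together with \eqref{eq:inclusion}--\eqref{eq:inclA}, and use \eqref{eq:wellprepared} to obtain $A(0)=A_0$. The only (harmless) difference is bookkeeping: you discard the $\{v=0\}$-contributions at the outset and settle for one-sided liminf bounds, whereas the paper computes the exact limits \eqref{eq:1}--\eqref{eq:3}, which it reuses later for the strong $H^1$ convergence and for \ref{def:EI'}.
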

\begin{proof}
	Fix $t\in [0,T]$ and pick $v\in H^1_{\Gamma,w(t)}(\Omega)^+\cap C^0(\Omega)$ such that $A(t)\subseteq \{v>0\}$, so that by \ref{def:GSeps} we have	
	\begin{equation}\label{eq:GSeps}
		\frac 12 \int_\Omega|\nabla u^{\eps_n}(t)|^2\d x+\int_\Omega\Phi\left(x,\frac{u^\epsn(t)}{\epsn},\frac{\gamma^\epsn(t)}{\epsn}\right)\d x\le \frac 12 \int_\Omega|\nabla v|^2\d x+\int_\Omega\Phi\left(x,\frac{v}{\epsn},\frac{\gamma^\epsn(t)}{\epsn}\right)\d x.
	\end{equation}
	Observe that the last term above can be bounded in this way:
	\begin{align*}
		&\int_\Omega\Phi\left(x,\frac{v}{\epsn},\frac{\gamma^\epsn(t)}{\epsn}\right)\d x\\
		=&\int_{\{v>0\}}\!\!\!\!\!\!\Phi\left(x,\frac{v}{\epsn},\frac{\gamma^\epsn(t)}{\epsn}\right)\d x+\int_{\{v=0\}\cap A^\epsn(t)}\!\!\!\!\!\!\Phi\left(x,0,\frac{\gamma^\epsn(t)}{\epsn}\right)\d x+\int_{\{v=0\}\setminus A^\epsn(t)}\!\!\!\!\!\!\Phi\left(x,0,\frac{\gamma^\epsn(t)}{\epsn}\right)\d x\\
		\le&\int_{\{v>0\}}\!\!\!\!\!\!\Phi\left(x,\frac{v}{\epsn},\frac{\gamma^\epsn(t)}{\epsn}\right)\d x+\int_{\{v=0\}\cap A^\epsn(t)}\!\!\!\!\!\!\Phi\left(x,0,\frac{\gamma^\epsn(t)}{\epsn}\right)\d x+\int_{\Omega\setminus A^\epsn(t)}\!\!\!\!\!\!\Phi\left(x,0,\frac{\gamma^\epsn(t)}{\epsn}\right)\d x.
	\end{align*}
On the other hand, by using \ref{hyp:phi5}, we can estimate from below the second term on the left-hand side of \eqref{eq:GSeps}:
\begin{align*}
	&\int_\Omega\Phi\left(x,\frac{u^\epsn(t)}{\epsn},\frac{\gamma^\epsn(t)}{\epsn}\right)\d x\\
	=&\int_{A^\epsn(t)}\Phi\left(x,\frac{u^\epsn(t)}{\epsn},\frac{\gamma^\epsn(t)}{\epsn}\right)\d x+\int_{\Omega\setminus A^\epsn(t)}\Phi\left(x,\frac{u^\epsn(t)}{\epsn},\frac{\gamma^\epsn(t)}{\epsn}\right)\d x\\
	\ge & \int_{A^\epsn(t)}\Phi\left(x,\frac{u^\epsn(t)}{\epsn},\frac{\gamma^\epsn(t)}{\epsn}\right)\d x+\int_{\Omega\setminus A^\epsn(t)}\Phi\left(x,0,\frac{\gamma^\epsn(t)}{\epsn}\right)\d x.
\end{align*}
By combining the previous two inequalities with \eqref{eq:GSeps} we end up with
\begin{equation}\label{eq:GSepsimproved}
	\begin{aligned}
		&\frac 12 \int_\Omega|\nabla u^{\eps_n}(t)|^2\d x+\int_{A^\epsn(t)}\Phi\left(x,\frac{u^\epsn(t)}{\epsn},\frac{\gamma^\epsn(t)}{\epsn}\right)\d x\\
		\le& \frac 12 \int_\Omega|\nabla v|^2\d x+\int_{\{v>0\}}\Phi\left(x,\frac{v}{\epsn},\frac{\gamma^\epsn(t)}{\epsn}\right)\d x+\int_{\{v=0\}\cap A^\epsn(t)}\Phi\left(x,0,\frac{\gamma^\epsn(t)}{\epsn}\right)\d x.
	\end{aligned}
\end{equation}

We now notice that
\begin{equation*}
\int_{A^\epsn(t)} \Phi\left(x,\frac{u^\epsn(t)}{\epsn},\delta_{\epsn}\right)\d x	\le\int_{A^\epsn(t)}\Phi\left(x,\frac{u^\epsn(t)}{\epsn},\frac{\gamma^\epsn(t)}{\epsn}\right)\d x\le \int_{A^\epsn(t)}\kappa_\Phi\d x.
\end{equation*}
The second inequality directly follows from \ref{hyp:phi7}, while the first one is a byproduct of \ref{hyp:phi5} together with the definition \eqref{eq:Aepst} of $A^\epsn(t)$ (recall that $\Phi(x,\cdot,\cdot)\equiv 0$ on $A_0$ by \ref{hyp:phi6}). The term on the right converges to $\int_\Omega\kappa_\Phi\rho(t)\d x$ as $n\to +\infty$ by \eqref{eq:convrho}. The term on the left does the same since the integrand strongly converges to $\kappa_\Phi$ in $L^1(\Omega)$ by Dominated Convergence Theorem, exploiting \ref{hyp:phi7} and \ref{hyp:phi2}.

We have thus proved that
\begin{subequations}
\begin{equation}\label{eq:1}
	\lim\limits_{n\to +\infty}\int_{A^\epsn(t)}\Phi\left(x,\frac{u^\epsn(t)}{\epsn},\frac{\gamma^\epsn(t)}{\epsn}\right)\d x=\int_\Omega \kappa_\Phi\rho(t)\d x.
\end{equation}
Analogously, one can show that
\begin{equation}\label{eq:2}
	\lim\limits_{n\to +\infty}\int_{\{v=0\}\cap A^\epsn(t)}\Phi\left(x,0,\frac{\gamma^\epsn(t)}{\epsn}\right)\d x=\int_{\{v=0\}} \kappa_\Phi\rho(t)\d x.
\end{equation}
On the other hand, since $v(x)/\epsn$ diverges as $n\to +\infty$ whenever $v(x)>0$, there also holds
\begin{equation}\label{eq:3}
	\lim\limits_{n\to +\infty}\int_{\{v>0\}}\Phi\left(x,\frac{v}{\epsn},\frac{\gamma^\epsn(t)}{\epsn}\right)\d x=\int_{\{v>0\}} \kappa_\Phi\d x.
\end{equation}
\end{subequations}

By sending $n\to +\infty$ in \eqref{eq:GSepsimproved}, exploiting weak lower semicontinuity of the first term and using \eqref{eq:1}, \eqref{eq:2}, \eqref{eq:3}, we infer
\begin{equation*}
	\frac 12\int_\Omega|\nabla u(t)|^2 \d x+\int_\Omega \kappa_\Phi\rho(t)\d x\le \frac 12 \int_\Omega|\nabla v|^2\d x+\int_{\{v>0\}} \kappa_\Phi\d x+\int_{\{v=0\}} \kappa_\Phi\rho(t)\d x.
\end{equation*}
Recalling that $\rho(t)=1$ on $A(t)$ by definition \eqref{eq:A(t)}, and that $\{v=0\}\subseteq\Omega\setminus A(t)$ since we chose $v$ satisfying $A(t)\subseteq \{v>0\}$, we finally obtain
\begin{equation*}
	\frac 12\int_\Omega|\nabla u(t)|^2 \d x+\int_{A(t)} \kappa_\Phi\d x\le \frac 12 \int_\Omega|\nabla v|^2\d x+\int_{\{v>0\}} \kappa_\Phi\d x.
\end{equation*}

Since $\{u(t)>0\}\subseteq A(t)$ by \eqref{eq:inclusion}, we are in a position to apply Lemma~\ref{lemma:equivalent}, which thus yields
\begin{equation*}
	\displaystyle \frac 12\int_\Omega |\nabla u(t)|^2 \, dx  \leq \frac 12\int_\Omega |\nabla v|^2 \, dx + \int_{\{v > 0\}\setminus A(t)}\kappa_\Phi\, dx,\qquad \text{for all } v\in H^1_{\Gamma,w(t)}(\Omega).
\end{equation*}
In turn, this inequality  implies \ref{def:GS'} by means of \eqref{eq:inclA}. Furthermore, Lemma~\ref{lemma:equivalent} also yields that $u(t)=\mf h_{A(t),w(t)}$. In particular, the convergence \eqref{eq:weakconv} does not depend on the chosen subsequence.

Finally, combining \eqref{eq:1} at time $t=0$ with assumption \eqref{eq:wellprepared} we deduce
\begin{equation*}
	0\ge \int_\Omega \kappa_\Phi\rho(0)\d x\ge \int_{A(0)\setminus A_0} \kappa_\Phi\d x,
\end{equation*}
and so $A(0)=A_0$ and we conclude.
\end{proof}

A similar argument  shows that the convergence of displacements is actually strong in $H^1(\Omega)$.

\begin{cor}
	Convergence \eqref{eq:weakconv} is strong in $H^1(\Omega)$.
\end{cor}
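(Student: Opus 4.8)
The plan is to upgrade the weak $H^1(\Omega)$ convergence in \eqref{eq:weakconv} to a strong one by proving convergence of the Dirichlet energies, $\int_\Omega|\nabla u^{\eps_n}(t)|^2\d x\to\int_\Omega|\nabla u(t)|^2\d x$. Since $u^{\eps_n}(t)$ and $u(t)$ lie in the same affine subspace $H^1_{\Gamma,w(t)}(\Omega)$ and $u^{\eps_n}(t)\to u(t)$ strongly in $L^2(\Omega)$ by the compact embedding $H^1(\Omega)\hookrightarrow L^2(\Omega)$, expanding $\|\nabla u^{\eps_n}(t)-\nabla u(t)\|_{L^2(\Omega)}^2$ and invoking the weak convergence will then give $u^{\eps_n}(t)\to u(t)$ strongly in $H^1(\Omega)$. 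By weak lower semicontinuity one already has $\int_\Omega|\nabla u(t)|^2\d x\le\liminf_n\int_\Omega|\nabla u^{\eps_n}(t)|^2\d x$, so the only thing left to prove is the companion bound $\limsup_n\int_\Omega|\nabla u^{\eps_n}(t)|^2\d x\le\int_\Omega|\nabla u(t)|^2\d x$.

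To obtain it I would rerun the computation from the proof of Proposition~\ref{prop:GS'}, this time \emph{keeping} the gradient term on the left rather than discarding it via semicontinuity, and with the particular competitor $v=u(t)\in H^1_{\Gamma,w(t)}(\Omega)^+$ (admissible by Proposition~\ref{prop:convu}). Starting from \eqref{eq:GSepsimproved} with this $v$, keeping $\tfrac12\int_\Omega|\nabla u^{\eps_n}(t)|^2\d x$ on the left, moving the remaining nonnegative $\Phi$-integral to the right and passing to the $\limsup$, the three limits \eqref{eq:1}, \eqref{eq:2} and \eqref{eq:3} (the last two evaluated at $v=u(t)$) reduce the right-hand side to
\[
\tfrac12\int_\Omega|\nabla u(t)|^2\d x+\int_{\{u(t)>0\}}\kappa_\Phi\d x+\int_{\{u(t)=0\}}\kappa_\Phi\rho(t)\d x-\int_\Omega\kappa_\Phi\rho(t)\d x .
\]
The last two integrals sum to $-\int_{\{u(t)>0\}}\kappa_\Phi\rho(t)\d x$; since $\{u(t)>0\}\subseteq A(t)$ by \eqref{eq:inclusion} and $\rho(t)\equiv1$ on $A(t)$ by the very definition \eqref{eq:A(t)}, this equals $-\int_{\{u(t)>0\}}\kappa_\Phi\d x$, which cancels the positive $\kappa_\Phi$-term and leaves exactly $\tfrac12\int_\Omega|\nabla u(t)|^2\d x$, as required.

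I do not expect a genuine obstacle here; the only point deserving attention is that $v=u(t)$ is not of the kind of competitor used in Proposition~\ref{prop:GS'}, where $v$ is taken continuous with $A(t)\subseteq\{v>0\}$, whereas now the opposite inclusion $\{u(t)>0\}\subseteq A(t)$ holds. One should therefore check that the derivation of \eqref{eq:GSepsimproved} and of the limits \eqref{eq:1}--\eqref{eq:3} relies only on $v\in H^1_{\Gamma,w(t)}(\Omega)^+$; this is indeed the case, continuity of $v$ having been needed in Proposition~\ref{prop:GS'} solely to apply Lemma~\ref{lemma:equivalent}.
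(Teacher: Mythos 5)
Your proposal is correct and follows essentially the same route as the paper: the paper also takes $v=u(t)$ as competitor in \ref{def:GSeps}, reruns the computations of Proposition~\ref{prop:GS'} keeping the gradient term, and uses $\rho(t)=1$ a.e.\ on $\{u(t)>0\}$ (via \eqref{eq:inclusion} and \eqref{eq:A(t)}) to obtain $\limsup_{\eps\to 0}\frac12\int_\Omega|\nabla u^\eps(t)|^2\d x\le\frac12\int_\Omega|\nabla u(t)|^2\d x$, whence strong convergence. Your closing observation that continuity of $v$ and the inclusion $A(t)\subseteq\{v>0\}$ were needed in Proposition~\ref{prop:GS'} only to invoke Lemma~\ref{lemma:equivalent}, not to derive \eqref{eq:GSepsimproved} or the limits \eqref{eq:1}--\eqref{eq:3}, is exactly the point implicit in the paper's phrase \lq\lq the same computations performed in Proposition~\ref{prop:GS'}\rq\rq.
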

	\begin{proof}
	
	Fix $t\in [0,T]$. By choosing $v=u(t)$ as a competitor in \ref{def:GSeps}, the same computations performed in Proposition~\ref{prop:GS'} lead to
	\begin{equation*}
		\limsup\limits_{\eps\to 0}\frac 12\int_\Omega|\nabla u^\eps(t)|^2 \d x+\int_\Omega \kappa_\Phi\rho(t)\d x\le \frac 12 \int_\Omega|\nabla u(t)|^2\d x+\int_{\{u(t)>0\}} \kappa_\Phi\d x+\int_{\{u(t)=0\}} \kappa_\Phi\rho(t)\d x.
	\end{equation*}
	Since $\rho(t)=1$ almost everywhere on $\{u(t)>0\}$ by \eqref{eq:inclusion}, we finally infer
	\begin{equation*}
		\limsup\limits_{\eps\to 0}\frac 12\int_\Omega|\nabla u^\eps(t)|^2 \d x\le \frac 12 \int_\Omega|\nabla u(t)|^2\d x,
	\end{equation*}
	and we conclude.
\end{proof}

	This last proposition shows how to pass to the limit in the energy balance \ref{def:EBeps}, thus concluding the proof of Theorem \ref{thm:main}.

\begin{prop}
	The limit displacement $u(t)$ obtained in Proposition~\ref{prop:convu} satisfies \ref{def:EI'}.
\end{prop}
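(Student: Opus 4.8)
The goal is to pass to the limit in the energy balance \ref{def:EBeps}. The plan is to start from
\begin{equation*}
\frac 12 \int_\Omega|\nabla u^\eps(t)|^2\d x+\int_\Omega\Phi\!\left(x,\tfrac{u^\eps(t)}{\eps},\tfrac{\gamma^\eps(t)}{\eps}\right)\d x
=\frac 12 \int_\Omega |\nabla u_0^\eps|^2\d x+\int_\Omega \Phi\!\left(x,\tfrac{u_0^\eps}{\eps},\tfrac{\gamma_0^\eps}{\eps}\right)\d x+\int_0^t  \int_\Omega \nabla \dot w(\tau)\cdot \nabla u^\eps(\tau)  \d x\d\tau
\end{equation*}
and handle the four groups of terms separately. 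For the cohesive term at time $t$ on the left: splitting $\Omega=A^\eps(t)\cup(\Omega\setminus A^\eps(t))$ and using \ref{hyp:phi5} and the definition \eqref{eq:Aepst} of $A^\eps(t)$ (exactly as in Proposition~\ref{prop:GS'}) gives, on $A^\eps(t)$, the lower bound $\Phi(x,u^\eps(t)/\eps,\gamma^\eps(t)/\eps)\ge \Phi(x,u^\eps(t)/\eps,\delta_\eps)\to\kappa_\Phi(x)$ in $L^1$, so that by \eqref{eq:1},
\begin{equation*}
\liminf_{\eps\to 0}\int_\Omega\Phi\!\left(x,\tfrac{u^\eps(t)}{\eps},\tfrac{\gamma^\eps(t)}{\eps}\right)\d x\ge \int_\Omega\kappa_\Phi\,\rho(t)\d x\ge\int_{A(t)}\kappa_\Phi\d x,
\end{equation*}
using that $\rho(t)=1$ on $A(t)$ together with \ref{hyp:phi6}, so $\kappa_\Phi=0$ on $A_0$ and $\int_{A(t)}\kappa_\Phi=\int_{A(t)\setminus A_0}\kappa_\Phi$. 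The first term on the left is handled by weak lower semicontinuity of the Dirichlet energy along the (whole) subsequence of Proposition~\ref{prop:convu}. The initial data terms on the right vanish in the limit: the Dirichlet part because $u_0^\eps$ is bounded in $H^1$ but actually, more precisely, because $\tfrac 12\int_\Omega|\nabla u_0^\eps|^2\d x\le \mathcal E(0,A_0)+o(1)$ — one uses the global stability \eqref{eq:stable0} with competitor $\mathfrak h_{A_0,w(0)}$ together with the well-preparedness \eqref{eq:wellprepared} to get $\limsup_\eps\tfrac12\int_\Omega|\nabla u_0^\eps|^2\d x\le \tfrac12\int_\Omega|\nabla u(0)|^2\d x$ (recall $u(0)=\mathfrak h_{A_0,w(0)}$ by Proposition~\ref{prop:GS'}); and the cohesive part vanishes by \eqref{eq:wellprepared} itself.

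The remaining term is the work integral $\int_0^t\!\int_\Omega \nabla\dot w(\tau)\cdot\nabla u^\eps(\tau)\d x\d\tau$, and this is where care is needed. Since by the Corollary above $u^\eps(\tau)\to u(\tau)$ strongly in $H^1(\Omega)$ for every fixed $\tau\in[0,T]$, the integrand $\int_\Omega\nabla\dot w(\tau)\cdot\nabla u^\eps(\tau)\d x$ converges pointwise in $\tau$ to $\int_\Omega\nabla\dot w(\tau)\cdot\nabla u(\tau)\d x$; moreover it is dominated by $\|\dot w(\tau)\|_{H^1(\Omega)}\,C$ with $C$ the uniform bound from \eqref{eq:bounds} and $\dot w\in L^1(0,T;H^1(\Omega))$. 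Hence the Dominated Convergence Theorem yields
\begin{equation*}
\int_0^t\!\int_\Omega \nabla\dot w(\tau)\cdot\nabla u^\eps(\tau)\d x\d\tau\xrightarrow[\eps\to 0]{}\int_0^t\!\int_\Omega \nabla\dot w(\tau)\cdot\nabla u(\tau)\d x\d\tau,
\end{equation*}
and the measurability/integrability of $\tau\mapsto\int_\Omega\nabla\dot w(\tau)\cdot\nabla u(\tau)\d x$ follows from the pointwise-limit characterization plus the same domination (note this is consistent with Proposition~\ref{prop:propertiesGES}, which already gives it directly from \eqref{eq:bounds} for $u=u(\cdot)$).

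Putting the four limits together, taking $\liminf_{\eps\to 0}$ on the left and $\lim_{\eps\to 0}$ on the right of the equality \ref{def:EBeps}, gives precisely
\begin{equation*}
\frac 12 \int_\Omega|\nabla u(t)|^2\d x+\int_{A_{u(t)}\setminus A_0}\!\!\kappa_\Phi\d x\le\frac 12 \int_\Omega|\nabla u(0)|^2\d x+\int_0^t\!\int_\Omega \nabla\dot w(\tau)\cdot\nabla u(\tau)\d x\d\tau,
\end{equation*}
where in the dissipation term we used $A_{u(t)}\subseteq A(t)$ from \eqref{eq:inclA} (so $\int_{A_{u(t)}\setminus A_0}\kappa_\Phi\le\int_{A(t)\setminus A_0}\kappa_\Phi$) — this is exactly \ref{def:EI'}. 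The main obstacle is the bookkeeping that makes the cohesive term at time $t$ produce a dissipation bounded \emph{below} by $\int_{A(t)\setminus A_0}\kappa_\Phi$ (not merely by $\int\kappa_\Phi\rho(t)$, which would be too small) — this is forced precisely by \eqref{eq:A(t)}, i.e. $\rho(t)=1$ on $A(t)$ — combined with ensuring the initial Dirichlet energy does not contribute in the limit, which is where the well-preparedness \eqref{eq:wellprepared} and the identification $u(0)=\mathfrak h_{A_0,w(0)}$ enter. With \ref{def:CO'}, \ref{def:ID'}, \ref{def:GS'}, \ref{def:EI'} all verified, Proposition~\ref{prop:equiv} applies to $u(\cdot)$ and $A_{u(t)}$ is a shape energetic solution with toughness $\kappa_\Phi$, completing the proof of Theorem~\ref{thm:main}.
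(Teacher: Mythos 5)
Your proposal is correct and follows essentially the same route as the paper: pass to the limit in \ref{def:EBeps} using lower semicontinuity (in fact strong convergence) for the Dirichlet term, the chain $\int_\Omega\Phi\ge\int_{A^\eps(t)}\Phi\to\int_\Omega\kappa_\Phi\rho(t)\ge\int_{A(t)}\kappa_\Phi\ge\int_{A_{u(t)}\setminus A_0}\kappa_\Phi$ via \eqref{eq:1}, \eqref{eq:A(t)} and \eqref{eq:inclA}, well-preparedness for the initial cohesive term, and dominated convergence (which the paper leaves implicit) for the work integral. The only cosmetic deviation is the initial Dirichlet energy, where the paper simply invokes the strong $H^1$ convergence $u^\eps(0)=u_0^\eps\to u(0)$ from the Corollary, whereas you re-derive the needed $\limsup$ bound from \eqref{eq:stable0} and \eqref{eq:wellprepared}; both are fine.
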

\begin{proof}
	Fix $t\in [0,T]$. From \ref{def:EBeps} we know that
		\begin{align*}
		&\frac 12 \int_\Omega|\nabla u^\eps(t)|^2\d x+\int_\Omega\Phi\left(x,\frac{u^\eps(t)}{\eps},\frac{\gamma^\eps(t)}{\eps}\right)\d x\\
		=&\frac 12 \int_\Omega|\nabla u^\eps_0|^2\d x+\int_\Omega\Phi\left(x,\frac{u^\eps_0}{\eps},\frac{\gamma^\eps_0}{\eps}\right)\d x+\int_0^t  \int_\Omega \nabla \dot w(\tau)\cdot \nabla u^\eps(\tau)  \d x\d\tau.
	\end{align*}
	The right-hand side easily converges to $\displaystyle \frac 12 \int_\Omega|\nabla u(0)|^2\d x+\int_0^t  \int_\Omega \nabla \dot w(\tau)\cdot \nabla u(\tau)  \d x\d\tau$ as $\eps\to 0$, due to the strong convergence of the displacements and to \eqref{eq:wellprepared}. As regards the left-hand side, observing that $$\displaystyle\int_\Omega\Phi\left(x,\frac{u^\eps(t)}{\eps},\frac{\gamma^\eps(t)}{\eps}\right)\d x\ge \int_{A^\eps(t)}\Phi\left(x,\frac{u^\eps(t)}{\eps},\frac{\gamma^\eps(t)}{\eps}\right)\d x$$ and recalling \eqref{eq:1} we deduce
	\begin{align*}
		\liminf\limits_{\eps\to 0}\left(\frac 12 \int_\Omega|\nabla u^\eps(t)|^2\d x+\int_\Omega\Phi\left(x,\frac{u^\eps(t)}{\eps},\frac{\gamma^\eps(t)}{\eps}\right)\d x\right)
		&\ge  \frac 12 \int_\Omega|\nabla u(t)|^2\d x+\int_\Omega\kappa_\Phi\rho(t) \d x\\
		&\ge\frac 12 \int_\Omega|\nabla u(t)|^2\d x+\int_{A(t)}\kappa_\Phi \d x\\
		&\ge\frac 12 \int_\Omega|\nabla u(t)|^2\d x+\int_{A_{u(t)}\setminus A_0}\kappa_\Phi \d x.
	\end{align*}
In the second inequality we exploited definition \eqref{eq:A(t)}, while in the last one we used \eqref{eq:inclA}. This concludes the proof.
\end{proof}

	\appendix
	\section{Existence of cohesive generalized energetic solutions}\label{app}
	
	We here provide a brief proof of the following result, along the lines of \cite{BonCavFredRiva,Riv}.
	
	\begin{thm}		
		Assume that the prescribed displacement $w$ satisfies \eqref{hyp:w} and the cohesive density $\Phi$ fulfils \ref{hyp:phi2}-\ref{hyp:phi6}. Given an initial state $(u_0,\gamma_0)\in H^1_{\Gamma,w(0)}(\Omega)\times L^0(\Omega)^+$ satisfying \eqref{eq:stable0}, there exists a generalized energetic solution of the cohesive debonding model.
	\end{thm}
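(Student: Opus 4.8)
The plan is to construct a generalized energetic solution via a time-incremental (minimizing movements) scheme, along the lines of \cite{BonCavFredRiva,Riv}, only indicating the steps and the points where the assumptions on $\Phi$ and on $w$ enter. For $n\in\N$ set $\tau_n:=T/n$, $t^n_k:=k\tau_n$, and define $(u^n_k,\gamma^n_k)_{k=0}^n$ inductively by $(u^n_0,\gamma^n_0):=(u_0,\gamma_0)$ and, for $k\ge1$,
\[
u^n_k\in\argmin_{v\in H^1_{\Gamma,w(t^n_k)}(\Omega)}\Big(\tfrac12\int_\Omega|\nabla v|^2\d x+\int_\Omega\Phi(x,|v|,\gamma^n_{k-1})\d x\Big),\qquad\gamma^n_k:=\gamma^n_{k-1}\vee|u^n_k|.
\]
Each incremental minimizer exists by the direct method: the functional is coercive on $H^1_{\Gamma,w(t^n_k)}(\Omega)$ by Poincar\'e's inequality and $\Phi\ge0$, the Dirichlet term is sequentially weakly lower semicontinuous, and $v\mapsto\int_\Omega\Phi(x,|v|,\gamma^n_{k-1})\d x$ is in fact weakly continuous on $H^1(\Omega)$ by \ref{hyp:phi2}, \ref{hyp:phi3}, the Carath\'eodory property and dominated convergence (via the compact embedding into $L^2(\Omega)$).

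Next I would extract the discrete properties. The minimality of $u^n_k$ yields, for every $v\in H^1_{\Gamma,w(t^n_k)}(\Omega)$, the inequality $\tfrac12\int_\Omega|\nabla u^n_k|^2+\int_\Omega\Phi(x,|u^n_k|,\gamma^n_{k-1})\le\tfrac12\int_\Omega|\nabla v|^2+\int_\Omega\Phi(x,|v|,\gamma^n_{k-1})$; by the structural identity \ref{hyp:phi4} the left integrand equals $\Phi(x,|u^n_k|,\gamma^n_k)$, while by monotonicity \ref{hyp:phi5} and $\gamma^n_{k-1}\le\gamma^n_k$ the right one is at most $\Phi(x,|v|,\gamma^n_k)$, so $(u^n_k,\gamma^n_k)$ satisfies the discrete global stability at time $t^n_k$. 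Arguing as in Proposition~\ref{prop:propertiesGES} with the competitors $(u^n_k)^+$ and $u^n_k\wedge M$ gives $0\le u^n_k\le M$, testing with $w(t^n_k)$ gives a bound on $\|u^n_k\|_{H^1(\Omega)}$ uniform in $n,k$, and hence $0\le\gamma^n_k-\gamma_0\le M$ (note $\gamma_0$ need not be integrable). For the discrete energy inequality the competitor $u^n_{k-1}+w(t^n_k)-w(t^n_{k-1})$ is natural: the crucial point is that, by \eqref{eq:w0}, the increment $w(t^n_k)-w(t^n_{k-1})$ is supported in $A_0$, where $\Phi(x,\cdot,\cdot)\equiv0$ by \ref{hyp:phi6}, so that $\Phi(x,|u^n_{k-1}+w(t^n_k)-w(t^n_{k-1})|,\gamma^n_{k-1})=\Phi(x,|u^n_{k-1}|,\gamma^n_{k-1})$ and \emph{no} spurious Lipschitz remainder appears; expanding the Dirichlet term one gets
\[
\mc G^n_k\le\mc G^n_{k-1}+\int_\Omega\nabla u^n_{k-1}\cdot\nabla\big(w(t^n_k)-w(t^n_{k-1})\big)\d x+\tfrac12\int_\Omega\big|\nabla\big(w(t^n_k)-w(t^n_{k-1})\big)\big|^2\d x,
\]
with $\mc G^n_k:=\tfrac12\int_\Omega|\nabla u^n_k|^2\d x+\int_\Omega\Phi(x,|u^n_k|,\gamma^n_k)\d x$. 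Summing over $k$, the last term is infinitesimal by absolute continuity of $w$, a discrete Gronwall argument reconfirms the uniform bounds, and a telescoping estimate shows that $\bar\gamma^n-\gamma_0$ has uniformly bounded variation in $BV([0,T];L^1(\Omega))$, where $\bar u^n,\bar\gamma^n$ denote the piecewise-constant interpolants.

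I would then pass to the limit along a subsequence. A diagonal argument over a countable dense set of times, together with the uniform $H^1$ and $L^\infty$ bounds, produces $u(t)\in H^1_{\Gamma,w(t)}(\Omega)^+$ with $\bar u^n(t)\rightharpoonup u(t)$ in $H^1(\Omega)$ and weakly$^*$ in $L^\infty(\Omega)$ for every $t$, while a (generalized) Helly selection argument applied to $\bar\gamma^n-\gamma_0$ gives a nondecreasing $t\mapsto\gamma(t)$ with $\gamma(t)\ge|u(t)|$. Passing to the limit in the discrete global stability yields \ref{def:GS}; passing to the limit in the discrete energy inequality — after recognising $\sum_k\int_\Omega\nabla u^n_{k-1}\cdot\nabla(w(t^n_k)-w(t^n_{k-1}))\d x$ as a Riemann approximation of $\int_0^t\int_\Omega\nabla\dot w\cdot\nabla u\,\d x\d\tau$ and discarding the vanishing remainder — gives the inequality $\le$ in \ref{def:EB}. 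The reverse inequality $\ge$ is the classical lower-energy estimate: for a partition $0=s_0<\dots<s_m=t$ one tests the global stability at $s_{i-1}$ with $u(s_i)+w(s_{i-1})-w(s_i)$ (again the $A_0$-supported correction leaves $\Phi$ unchanged, and $\gamma(s_{i-1})\le\gamma(s_i)$ is used with \ref{hyp:phi5} in the favourable direction), then sums over $i$ and lets the mesh vanish. Combining the two inequalities gives \ref{def:EB}, and \ref{def:CO}, \ref{def:ID}, \ref{def:IR} are immediate.

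The step I expect to be the main obstacle is the passage to the limit in the cohesive term $\int_\Omega\Phi(x,\bar u^n(t),\bar\gamma^n(t))\d x$: assumption \ref{hyp:phi3} grants continuity only in the first variable, in the history variable $\Phi$ is merely nondecreasing, and $\bar\gamma^n(t)$ converges a priori only weakly, so one cannot pass to the limit directly. The way around this is to exploit the monotonicity in time of $\bar\gamma^n$, which — through Helly's theorem — upgrades the convergence of $\bar\gamma^n(t)$ to a pointwise one (a.e.\ in $x$) for every $t$; together with Fatou's lemma on one side and with the structural identity \ref{hyp:phi4} and the monotonicity \ref{hyp:phi5} on the other, this is enough to pass to the limit in both the stability and the energy relations, and thus to conclude the existence proof.
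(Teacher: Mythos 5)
Your construction (the incremental minimization with $\gamma^n_k=\gamma^n_{k-1}\vee|u^n_k|$, the discrete stability via \ref{hyp:phi4}--\ref{hyp:phi5}, the competitor $u^n_{k-1}+w(t^n_k)-w(t^n_{k-1})$ with the observation that the boundary correction is supported in $A_0$ by \eqref{eq:w0} and \ref{hyp:phi6}, and the classical lower energy estimate) coincides with the paper's argument. The genuine gap is exactly at the step you flag as the main obstacle: the compactness of the history variable. Monotonicity in time of $\bar\gamma^n$ together with the $BV([0,T];L^1(\Omega))$ bound gives, through the generalized Helly selection theorem, only \emph{weak} (or weak$^*$) convergence of $\bar\gamma^n(t)$ for every $t$; it does not ``upgrade'' this to convergence pointwise a.e.\ in $x$. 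Monotonicity acts in the time variable only and carries no spatial compactness: for instance $g^n(t,x)=\chi_{[T/2,T]}(t)\,\bigl(1+\sin(nx)\bigr)/2$ is nondecreasing in $t$, uniformly bounded, with uniformly bounded time-variation, yet for $t\ge T/2$ no subsequence converges a.e.\ in $x$. Since $z\mapsto\Phi(x,y,z)$ is a bounded nonlinear (merely nondecreasing and continuous) function, weak convergence of $\bar\gamma^n(t)$ is not sufficient to pass to the limit in $\int_\Omega\Phi(x,|v|,\bar\gamma^n(t))\d x$ in the stability condition, nor in the cohesive part of the energy, so your argument as written does not close.

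The paper closes this step by extracting \emph{spatial} regularity from the incremental problems: by \ref{hyp:phi3} the $y$-slope of $\Phi$ is uniformly bounded, so the Euler--Lagrange equation of each minimization gives $\max_k\|\Delta u_k\|_{L^\infty(\Omega)}\le C$, whence by $L^p$ elliptic regularity and Sobolev embedding $\max_k\|u_k\|_{C^{0,1}_{\rm loc}(\Omega)}\le C$; since Lipschitz bounds are preserved under taking maxima, $\widetilde\gamma_k:=u_1\vee\dots\vee u_k$ inherits the same bound, and Helly's theorem in $t$ combined with Ascoli--Arzel\`a in $x$ yields \emph{locally uniform} convergence $\widetilde\gamma^n(t)\to\widetilde\gamma(t)$ in $\Omega$ for all $t$. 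Setting $\gamma(t)=\gamma_0\vee\widetilde\gamma(t)$ (which also handles the merely measurable $\gamma_0$, as your subtraction of $\gamma_0$ does), the cohesive terms then pass to the limit by the Carath\'eodory continuity of $\Phi$ in $(y,z)$, the bound \ref{hyp:phi2} and dominated convergence. Note that in your proposal \ref{hyp:phi3} is used only for the weak continuity in the direct method; its real role in the paper is precisely to produce this equi-Lipschitz estimate, without which (or some substitute spatial estimate) the limit passage in $\Phi$ cannot be completed. With that ingredient added, the remainder of your outline matches the paper's proof.
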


	We employ the well-known Minimizing Movements scheme. Let $0=t^n_0<t^n_1<\dots<t^n_n=T$ be a sequence of partitions of $[0,T]$ such that 
	\begin{equation}\label{eq:fine}
		\lim\limits_{n\to +\infty}\sup\limits_{k=1,\dots, n}(t^n_k-t^n_{k-1})=0.
	\end{equation}
Starting from $(u_0,\gamma_0)$, for $k=1,\dots, n$ we consider the following iterative minimization algorithm:
	\begin{equation}\label{eq:minmov}
		\begin{cases}\displaystyle
			u_k\in \argmin\limits_{v\in H^1_{\Gamma,w(t^n_k)}(\Omega)}\left\{\frac 12\int_\Omega|\nabla v|^2 \d x+\int_\Omega\Phi(x,|v|,\gamma_{k-1}) \d x\right\},\\
			\gamma_k:=\gamma_{k-1}\vee |u_k|.
		\end{cases}
	\end{equation} 

By arguing as in Proposition~\ref{prop:propertiesGES}, we know that $u_k$ is nonnegative and
\begin{equation}\label{eq:bounduk}
	\max\limits_{k=0,\dots, n}(\|u_k\|_{L^\infty(\Omega)}+\|u_k\|_{H^1(\Omega)})\le C.
\end{equation}
This yields a uniform $L^\infty$ bound to $\widetilde{\gamma}_k:=0\vee u_1\wedge\dots\vee u_k$. Notice that $\gamma_k=\gamma_0\vee \widetilde{\gamma}_k$.

By exploiting \ref{hyp:phi3} and computing the Euler-Lagrange equations of \eqref{eq:minmov}, we additionally infer
\begin{equation*}
	\max\limits_{k=0,\dots, n}\|\Delta u_k\|_{L^\infty(\Omega)}\le C.
\end{equation*}
By $L^p$ elliptic regularity and an application of Sobolev Embedding Theorem we thus deduce
\begin{equation}\label{eq:lipbounduk}
	\max\limits_{k=0,\dots, n}\|u_k\|_{C^{0,1}_{\rm loc}(\Omega)}\le C,
\end{equation}
where $C^{0,1}_{\rm loc}(\Omega)$ denotes the space of locally Lipschitz continuous functions in $\Omega$. Since Lipschitz bounds are preserved with the operation of maximum, we also obtain
\begin{equation}\label{eq:lipbound}
	\max\limits_{k=0,\dots, n}\|\widetilde{\gamma}_k\|_{C^{0,1}_{\rm loc}(\Omega)}\le C.
\end{equation}

We now introduce the piecewise constant interpolants
\begin{equation*}
	u^n(t):=\begin{cases}
		u_k,&\text{if }t\in [t^n_k,t^n_{k+1}),\\
		u_n,&\text{if }t=T,
	\end{cases}\qquad
\tau^n(t):=\begin{cases}
	t^n_k,&\text{if }t\in [t^n_k,t^n_{k+1}),\\
	T,&\text{if }t=T,
\end{cases}
\end{equation*}
and analogous expressions for $\widetilde{\gamma}^n(t)$ and $\gamma^n(t)$. Notice that $\gamma^n(t)=\gamma_0\vee \widetilde{\gamma}^n(t)$. We also set $w^n(t):=w(\tau^n(t))$.

Exploiting \eqref{eq:lipbound}, Helly's Selection Theorem together with Ascoli-Arzel\'a Theorem ensure the existence of a subsequence (not relabelled) such that for all $t\in [0,T]$ there holds
\begin{equation*}
	\widetilde\gamma^n(t)\xrightarrow[n\to +\infty]{}\widetilde{\gamma}(t),\quad\text{locally uniformly in }\Omega.
\end{equation*}
In particular, setting $\gamma(t):=\gamma_0\vee \widetilde{\gamma}(t)$, we have
\begin{equation*}
	\gamma^n(t)\xrightarrow[n\to +\infty]{}{\gamma}(t),\quad\text{in }L^\infty_{\rm loc}(\Omega).
\end{equation*}
Moreover, by \eqref{eq:bounduk} and \eqref{eq:lipbounduk}, for any $t\in [0,T]$ we can extract a further subsequence (possibly depending on $t$) such that
\begin{equation*}
	u^{n_j}(t)\xrightarrow[j\to +\infty]{}u(t),\quad\text{weakly in $H^1(\Omega)$ and locally uniformly in }\Omega.
\end{equation*}
Previous convergences are enough to grant the validity of \ref{def:CO}, \ref{def:ID} and \ref{def:IR} of Definition~\ref{def:genensol}; we are just left to show \ref{def:GS} and \ref{def:EB}.

As regards the former, we first observe that the condition $\gamma(t)\ge u(t)$ follows again by the previous convergences and the definition of $\gamma_k$. We then pick $v\in H^1_{\Gamma, w(t)}(\Omega)$, and by choosing $v+w^{n_j}(t)-w(t)$ as a competitor in \eqref{eq:minmov}, we deduce
\begin{align*}
	&\frac 12 \int_\Omega|\nabla u(t)|^2\d x+\int_\Omega\Phi(x,u(t),\gamma(t))\d x\\
	\le&\liminf\limits_{j\to +\infty}\left[\frac 12 \int_\Omega|\nabla u^{n_j}(t)|^2\d x+\int_\Omega\Phi(x,u^{n_j}(t),\gamma^{n_j}(t))\d x\right]\\
	=&	 \liminf\limits_{j\to +\infty}\left[\frac 12 \int_\Omega|\nabla u^{n_j}(t)|^2\d x+\int_\Omega\Phi(x,u^{n_j}(t),\gamma^{n_j}(t-\tau^{n_j}(t)))\d x\right]\\
	\le & \liminf\limits_{j\to +\infty}\left[\frac 12 \int_\Omega|\nabla v+\nabla(w^{n_j}(t)-w(t))|^2\d x+\int_\Omega\Phi(x,|v+w^{n_j}(t)-w(t)|,\gamma^{n_j}(t-\tau^{n_j}(t)))\d x\right],
\end{align*}
where the only equality above is due to \ref{hyp:phi4}. The first term in the last line above converges to $\frac 12 \int_\Omega |\nabla v|^2 \d x$, since $w^n(t)\xrightarrow[n\to+\infty]{}w(t)$ strongly in $H^1(\Omega)$, while to deal with the second one we argue as follows. By using first \ref{hyp:phi6} (and \eqref{eq:w0}) and then \ref{hyp:phi5} we observe that
\begin{equation*}
	\int_\Omega\!\!\Phi(x,|v+w^{n_j}(t)-w(t)|,\gamma^{n_j}(t-\tau^{n_j}(t)))\d x\!=\!\!\!\int_\Omega\!\!\Phi(x,|v|,\gamma^{n_j}(t-\tau^{n_j}(t)))\d x\le\!\! \int_\Omega\!\!\Phi(x,|v|,\gamma^{n_j}(t))\d x,
\end{equation*}
which converges to $\int_\Omega\Phi(x,|v|,\gamma(t))\d x$. Thus \ref{def:GS} is proved.

The energy balance \ref{def:EB} is finally a byproduct of the following two lemmas.

\begin{lemma}
	There exists a vanishing sequence $R^n$ such that for all $t\in [t^n_1,T]$ there holds
	\begin{equation*}
		\begin{aligned}
			&\frac 12 \int_\Omega|\nabla u^n(t)|^2\d x+\int_\Omega\Phi(x,u^n(t),\gamma^n(t))\d x\\
			\le&\frac 12 \int_\Omega |\nabla u_0|^2\d x+\int_\Omega \Phi(x,u_0,\gamma_0) \d x+\int_0^t  \int_\Omega \nabla \dot w(\tau)\cdot \nabla u^n(\tau) \d x\d\tau+R^n.
		\end{aligned}
	\end{equation*}
\end{lemma}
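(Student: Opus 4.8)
The plan is to run the standard discrete upper energy estimate for the scheme \eqref{eq:minmov}. Fix $k\in\{1,\dots,n\}$ and set $\Delta w_k:=w(t^n_k)-w(t^n_{k-1})$. Since $u_{k-1}\in H^1_{\Gamma,w(t^n_{k-1})}(\Omega)$, the function $u_{k-1}+\Delta w_k$ belongs to $H^1_{\Gamma,w(t^n_k)}(\Omega)$ and is hence admissible in the minimization defining $u_k$, so
\[
\frac12\int_\Omega|\nabla u_k|^2\d x+\int_\Omega\Phi(x,u_k,\gamma_{k-1})\d x\le\frac12\int_\Omega|\nabla u_{k-1}+\nabla\Delta w_k|^2\d x+\int_\Omega\Phi(x,|u_{k-1}+\Delta w_k|,\gamma_{k-1})\d x.
\]
Two structural observations then turn this into a telescoping inequality. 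On the right-hand side, \eqref{eq:w0} gives $\Delta w_k\equiv0$ on $\Omega\setminus A_0$, while $\Phi(x,\cdot,\cdot)\equiv0$ on $A_0$ by \ref{hyp:phi6}; together with the nonnegativity of $u_{k-1}$ this yields $\int_\Omega\Phi(x,|u_{k-1}+\Delta w_k|,\gamma_{k-1})\d x=\int_\Omega\Phi(x,u_{k-1},\gamma_{k-1})\d x$. On the left-hand side, since $\gamma_k=\gamma_{k-1}\vee u_k$, assumption \ref{hyp:phi4} lets me rewrite $\Phi(x,u_k,\gamma_{k-1})=\Phi(x,u_k,u_k\vee\gamma_{k-1})=\Phi(x,u_k,\gamma_k)$. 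Expanding the square, the inequality becomes
\[
\frac12\int_\Omega|\nabla u_k|^2\d x+\int_\Omega\Phi(x,u_k,\gamma_k)\d x\le\frac12\int_\Omega|\nabla u_{k-1}|^2\d x+\int_\Omega\Phi(x,u_{k-1},\gamma_{k-1})\d x+\int_\Omega\nabla u_{k-1}\cdot\nabla\Delta w_k\d x+\frac12\int_\Omega|\nabla\Delta w_k|^2\d x.
\]

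Next I would sum this over $k=1,\dots,j$, where $j\ge1$ is the index with $t\in[t^n_j,t^n_{j+1})$ (and $j=n$ if $t=T$), so that $u^n(t)=u_j$ and $\gamma^n(t)=\gamma_j$. The left-hand sides telescope against the first two terms on the right, leaving
\[
\frac12\int_\Omega|\nabla u^n(t)|^2\d x+\int_\Omega\Phi(x,u^n(t),\gamma^n(t))\d x\le\frac12\int_\Omega|\nabla u_0|^2\d x+\int_\Omega\Phi(x,u_0,\gamma_0)\d x+\sum_{k=1}^j\int_\Omega\nabla u_{k-1}\cdot\nabla\Delta w_k\d x+\frac12\sum_{k=1}^j\int_\Omega|\nabla\Delta w_k|^2\d x.
\]
Since $u^n\equiv u_{k-1}$ on $[t^n_{k-1},t^n_k)$ and $\Delta w_k=\int_{t^n_{k-1}}^{t^n_k}\dot w(\tau)\d\tau$ (recall $w\in AC([0,T];H^1(\Omega))$), the first sum equals $\int_0^{\tau^n(t)}\int_\Omega\nabla\dot w(\tau)\cdot\nabla u^n(\tau)\d x\d\tau$, which differs from the desired $\int_0^t\int_\Omega\nabla\dot w\cdot\nabla u^n\d x\d\tau$ by a quantity bounded in absolute value by $C\int_{\tau^n(t)}^t\|\nabla\dot w(\tau)\|_{L^2(\Omega)}\d\tau$, using the uniform bound \eqref{eq:bounduk}; as for the quadratic term, the estimate $\|\nabla\Delta w_k\|_{L^2(\Omega)}\le\int_{t^n_{k-1}}^{t^n_k}\|\nabla\dot w\|_{L^2(\Omega)}\d\tau$ gives $\sum_{k=1}^j\|\nabla\Delta w_k\|_{L^2(\Omega)}^2\le\big(\max_{1\le k\le n}\int_{t^n_{k-1}}^{t^n_k}\|\nabla\dot w\|_{L^2(\Omega)}\d\tau\big)\int_0^T\|\nabla\dot w\|_{L^2(\Omega)}\d\tau$. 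Setting $R^n$ equal to the supremum over $t\in[t^n_1,T]$ of the sum of the absolute values of these two remainders, the claimed inequality follows, and $R^n\to0$ because $\|\nabla\dot w(\cdot)\|_{L^2(\Omega)}\in L^1(0,T)$ (so that $t\mapsto\int_0^t\|\nabla\dot w(\tau)\|_{L^2(\Omega)}\d\tau$ is uniformly continuous) and the mesh sizes vanish by \eqref{eq:fine}.

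I do not expect a genuine obstacle here: this is a routine a priori estimate. The points deserving some care are the two compatibility manipulations in the first paragraph — replacing $\gamma_{k-1}$ by $\gamma_k$ in the dissipative term via \ref{hyp:phi4}, which is exactly what makes the sum telescope, and erasing the load increment inside $\Phi$ via \eqref{eq:w0} and \ref{hyp:phi6} (together with $u_{k-1}\ge0$), which prevents spurious dissipation from being generated by the boundary loading — and the verification that the remainder $R^n$ is uniform in $t$, which is where the absolute continuity of $w$ in time enters.
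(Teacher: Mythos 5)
Your proposal is correct and follows essentially the same route as the paper: the same competitor $u_{k-1}+w(t^n_k)-w(t^n_{k-1})$ in \eqref{eq:minmov}, the same use of \ref{hyp:phi4} to pass from $\gamma_{k-1}$ to $\gamma_k$ and of \ref{hyp:phi6} with \eqref{eq:w0} (and $u_{k-1}\ge 0$) to erase the load increment inside $\Phi$, followed by telescoping and absorbing into $R^n$ exactly the two remainders the paper collects in $r^n(t)$ (your quadratic term $\tfrac12\sum_k\|\nabla\Delta w_k\|_{L^2}^2$ coincides with the paper's $\int_0^{\tau^n(t)}\int_\Omega\nabla\dot w\cdot\nabla(w-w^n)\,\mathrm dx\,\mathrm d\tau$, and the tail over $[\tau^n(t),t]$ is the same), both vanishing uniformly in $t$ by \eqref{eq:w}, \eqref{eq:bounduk} and \eqref{eq:fine}.
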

\begin{proof}
	Fix $t\in[t^n_1,T]$ and let $k$ such that $t\in[t^n_k,t^n_{k+1})$. For $j=1,\dots, k$, choosing $u_{j-1}+w(t^n_j)-w(t^n_{j-1})$ as competitor for $u_j$, and exploiting \ref{hyp:phi4} and \ref{hyp:phi6}, we deduce
	\begin{align*}
		&\frac 12\int_\Omega|\nabla u_j|^2 \d x+\int_\Omega\Phi(x,u_j,\gamma_{j}) \d x=\frac 12\int_\Omega|\nabla u_j|^2 \d x+\int_\Omega\Phi(x,u_j,\gamma_{j-1}) \d x\\
		\le&\frac 12\int_\Omega|\nabla u_{j-1}+\nabla( w(t^n_j)-w(t^n_{j-1}))|^2 \d x+\int_\Omega\Phi(x,|u_{j-1}+w(t^n_j)-w(t^n_{j-1})|,\gamma_{j-1}) \d x\\
		=&\frac 12\int_\Omega|\nabla u_{j-1}+\nabla( w(t^n_j)-w(t^n_{j-1}))|^2 \d x+\int_\Omega\Phi(x,u_{j-1},\gamma_{j-1}) \d x.
	\end{align*}
	Subtracting $\frac 12\int_\Omega|\nabla u_{j-1}|^2 \d x$ to both sides and summing from $j=1,\dots, k$ we infer
	\begin{align*}
		\frac 12\int_\Omega|\nabla u_k|^2 \d x+\int_\Omega\Phi(x,u_k,\gamma_{k})& \d x
		\le \frac 12 \int_\Omega |\nabla u_0|^2\d x+\int_\Omega \Phi(x,u_0,\gamma_0) \d x\\&+\sum_{j=1}^{k}\int_{t^n_{j-1}}^{t^n_j}\int_\Omega \nabla \dot w(\tau)\cdot (\nabla u_{j-1}+\nabla(w(\tau)-w(t^n_{j-1}))) \d x\d\tau.
	\end{align*}
Rewriting the above inequality in terms of $u^n(t)$ and $\gamma^n(t)$ we thus obtain
\begin{align*}
	&\frac 12 \int_\Omega|\nabla u^n(t)|^2\d x+\int_\Omega\Phi(x,u^n(t),\gamma^n(t))\d x\\
	\le&\frac 12 \int_\Omega |\nabla u_0|^2\d x+\int_\Omega \Phi(x,u_0,\gamma_0) \d x+\int_0^t  \int_\Omega \nabla \dot w(\tau)\cdot \nabla u^n(\tau) \d x\d\tau\\
	&+\underbrace{\int_0^{\tau^n(t)}\int_\Omega  \nabla \dot w(\tau)\cdot \nabla(w(\tau)-w^n(\tau)) \d x\d\tau-\int_{\tau^n(t)}^t\int_\Omega \nabla \dot w(\tau)\cdot\nabla u^n(\tau) \d x\d\tau}_{=:r^n(t)}.
\end{align*}
By exploiting \eqref{eq:w} and \eqref{eq:bounduk}, and recalling \eqref{eq:fine}, it is then standard to show that $|r^n(t)|$ can be bounded, uniformly in $t$, by a vanishing sequence $R^n$. So we conclude.
\end{proof}

\begin{lemma}
	Let the pair  $(u,\gamma)$ satisfy \ref{def:ID} and \ref{def:GS}. Then for all $t\in [0,T]$ there holds
	\begin{equation*}
		\begin{aligned}
			&\frac 12 \int_\Omega|\nabla u(t)|^2\d x+\int_\Omega\Phi(x,u(t),\gamma(t))\d x\\
			\ge&\frac 12 \int_\Omega |\nabla u_0|^2\d x+\int_\Omega \Phi(x,u_0,\gamma_0) \d x+\int_0^t  \int_\Omega \nabla \dot w(\tau)\cdot \nabla u(\tau) \d x\d\tau.
		\end{aligned}
	\end{equation*}
\end{lemma}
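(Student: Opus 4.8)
The plan is to run the classical Riemann-sum argument of the energetic-solution theory, \textbf{testing the global stability condition at each node of a time partition against a competitor transported backwards from the next node}. First observe that \ref{def:GS} alone already forces $u(t)\ge0$ and $\sup_{t\in[0,T]}\|u(t)\|_{H^1(\Omega)}\le C$, by the very same computations as in Proposition~\ref{prop:propertiesGES}; together with \ref{def:ID} this makes the right-hand side of the claimed inequality coincide with $\frac12\int_\Omega|\nabla u(0)|^2\d x+\int_\Omega\Phi(x,u(0),\gamma(0))\d x$ plus the work term. Write $e(t):=\frac12\int_\Omega|\nabla u(t)|^2\d x+\int_\Omega\Phi(x,u(t),\gamma(t))\d x$.

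Fix $t\in[0,T]$ and a partition $0=s_0<\dots<s_N=t$. For each $i$ I would test \ref{def:GS} at time $s_{i-1}$ with $v:=u(s_i)+w(s_{i-1})-w(s_i)$: this is admissible since $u(s_i)=w(s_i)$ on $\Gamma$ gives $v=w(s_{i-1})$ on $\Gamma$, and because $w(s_{i-1})-w(s_i)$ is supported in $A_0$ by \eqref{eq:w0}, where $\Phi(x,\cdot,\cdot)\equiv0$ by \ref{hyp:phi6}, while $u(s_i)\ge0$, one has $\Phi(x,|v|,\gamma(s_{i-1}))=\Phi(x,u(s_i),\gamma(s_{i-1}))\le\Phi(x,u(s_i),\gamma(s_i))$ a.e.\ (using \ref{hyp:phi5} and \ref{def:IR}). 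Expanding $\frac12\int_\Omega|\nabla v|^2\d x$, this rearranges into
\[
e(s_i)-e(s_{i-1})\ \ge\ \int_\Omega\nabla u(s_i)\cdot\nabla\big(w(s_i)-w(s_{i-1})\big)\d x-\tfrac12\|\nabla(w(s_i)-w(s_{i-1}))\|_{L^2(\Omega)}^2.
\]
Summing over $i$ and using \ref{def:ID} I obtain
\[
e(t)\ \ge\ e(0)+\int_0^t\!\!\int_\Omega\nabla\bar u_N(\tau)\cdot\nabla\dot w(\tau)\d x\d\tau-\tfrac12\sum_{i=1}^N\|\nabla(w(s_i)-w(s_{i-1}))\|_{L^2(\Omega)}^2,
\]
where $\bar u_N(\tau):=u(s_i)$ for $\tau\in(s_{i-1},s_i]$. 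The last sum tends to $0$ as the mesh vanishes: writing $w(s_i)-w(s_{i-1})=\int_{s_{i-1}}^{s_i}\dot w\d\tau$, it is estimated by $\big(\max_i\|w(s_i)-w(s_{i-1})\|_{H^1(\Omega)}\big)\int_0^T\|\dot w\|_{H^1(\Omega)}\d\tau$, which vanishes since $w\in AC([0,T];H^1(\Omega))$.

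The only genuine obstacle is then to pass the term $\int_0^t\int_\Omega\nabla\bar u_N\cdot\nabla\dot w\,\d x\d\tau$ to the limit, i.e.\ to show it converges to $\int_0^t\int_\Omega\nabla u\cdot\nabla\dot w\,\d x\d\tau$ along a suitable sequence of refining partitions, after which letting $N\to\infty$ finishes the proof. The difficulty is that $t\mapsto u(t)$ is a priori only a pointwise-defined selection with the uniform $H^1$ bound and no time-regularity, so $\bar u_N$ is a Riemann-sum-type object rather than a genuine interpolant. I would handle this by approximating $\dot w$ in $L^1(0,t;H^1(\Omega))$ by step functions in time — the bound $\|u(\tau)\|_{H^1}\le C$ absorbs the approximation error — and then, as the mesh shrinks, choosing the partition nodes inside the full-measure set of common Lebesgue points of a countable family of scalar maps $\tau\mapsto\int_\Omega\nabla u(\tau)\cdot\nabla\beta\d x$, with $\beta$ ranging in a countable dense subset of $H^1(\Omega)$; this forces $\bar u_N\rightharpoonup u$ weakly in $L^2((0,t)\times\Omega;\R^d)$ and hence the required convergence. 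As an alternative, one can first use \ref{def:GS} — tested both with the backward competitor above and with the forward one $u(s_{i-1})+w(s_i)-w(s_{i-1})$ — together with the monotonicity of $t\mapsto\int_\Omega\gamma(t)\d x$ to control the cohesive contribution, to deduce that $t\mapsto e(t)$ is right-continuous outside an at most countable set; this gives $\bar u_N(\tau)\rightharpoonup u(\tau)$ in $H^1(\Omega)$ for a.e.\ $\tau$, and dominated convergence then closes the argument.
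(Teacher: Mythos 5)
Your overall strategy is exactly the paper's: you test \ref{def:GS} at time $s_{i-1}$ against the transported competitor $u(s_i)+w(s_{i-1})-w(s_i)$, use \eqref{eq:w0} together with \ref{hyp:phi6} and \ref{hyp:phi5} to control the cohesive term, telescope, and reduce the claim to passing the Riemann-sum-type work term $\int_0^t\int_\Omega\nabla\bar u_N\cdot\nabla\dot w\d x\d\tau$ to the limit; the quadratic error $\tfrac12\sum_i\|\nabla(w(s_i)-w(s_{i-1}))\|_{L^2(\Omega)}^2$ is handled correctly. (Like the paper, you in fact also need $\gamma(s_{i-1})\le\gamma(s_i)$, i.e.\ \ref{def:IR}, which is not among the stated hypotheses but holds for the pair to which the lemma is applied.)

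The gap lies precisely in the step that needs care, namely your claim that choosing the partition nodes inside the full-measure set of common Lebesgue points of the maps $\tau\mapsto\int_\Omega\nabla u(\tau)\cdot\nabla\beta\d x$ \emph{forces} $\nabla\bar u_N\rightharpoonup\nabla u$. Being a Lebesgue point is not a sufficient selection criterion for Riemann sums: for a bounded measurable $g$ on $(0,t)$, take $g=\chi_E$ with $E$ open, dense and of small measure; every point of $E$ is a Lebesgue point of $g$, and partitions with vanishing mesh whose nodes all lie in $E$ give Riemann sums equal to $t$, not $\int_0^t g\d\tau$. Since $\tau\mapsto u(\tau)$ carries no time regularity, such behaviour of $g_\beta$ cannot be excluded, so your rule does not guarantee the convergence you need. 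What is required (and what the paper does, through the four conditions it imposes on its partitions $0=s^n_0<\dots<s^n_n=t$) is the classical selection lemma: for an $L^1$ function — or countably many of them simultaneously, by a diagonal argument — there \emph{exists} a sequence of partitions with vanishing mesh along which the Riemann sums converge to the integral; the partitions must be produced by this lemma, not merely constrained to consist of Lebesgue points. Your fallback argument does not repair this: right-continuity of the energy outside a countable set does not identify the weak $H^1$ limits of $u(s_i)$, $s_i\downarrow\tau$, with $u(\tau)$, because the functional $v\mapsto\frac12\int_\Omega|\nabla v|^2\d x+\int_\Omega\Phi(x,|v|,\gamma(\tau))\d x$ is not convex, stable states need not be unique, and $\gamma(s)$ need not converge to $\gamma(\tau)$ as $s\downarrow\tau$. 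Once the Lebesgue-point argument is replaced by the standard partition-selection lemma, your proof coincides with the one in the paper.
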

\begin{proof}
	Fix $t\in (0,T]$ and consider a sequence of partitions $0=s^n_0<\dots<s^n_n=t$ satisfying:
	\begin{enumerate}
		\item 	$\lim\limits_{n\to +\infty}\sup\limits_{k=1,\dots, n}(s^n_k-s^n_{k-1})=0;$
		\item $\displaystyle \lim\limits_{n\to +\infty}\sum_{k=1}^{n}(s^n_k-s^n_{k-1})\int_\Omega \nabla\dot w(s^n_k)\cdot\nabla u(s^n_k)\d x=\int_0^t\int_\Omega \nabla\dot w(\tau)\cdot\nabla u(\tau) \d x\d\tau$;
		\item $\displaystyle \lim\limits_{n\to +\infty}\sum_{k=1}^{n}\left\|(s^n_k-s^n_{k-1})\dot w(s^n_k)-\int_{s^n_{k-1}}^{s^n_k}\dot w(\tau)\d\tau\right\|_{H^1(\Omega)}=0$;
		\item $\displaystyle \sup\limits_{k=1,\dots, n}\int_{s^n_{k-1}}^{s^n_k}\|\dot w(\tau)\|_{H^1(\Omega)}\d\tau\le \frac 1n.$
	\end{enumerate}
Picking the function $u(s^n_k)+w(s^n_{k-1})-w(s^n_k)$ as a competitor for $u(s^n_{k-1})$ in \ref{def:GS} we infer
\begin{align*}
	&\frac 12 \int_\Omega|\nabla u(s^n_{k-1})|^2\d x+\int_\Omega\Phi(x,u(s^n_{k-1}),\gamma(s^n_{k-1}))\d x\\
	\le&\frac 12 \int_\Omega |\nabla u(s^n_k)+\nabla (w(s^n_{k-1})-w(s^n_k))|^2\d x+\int_\Omega \Phi(x,|u(s^n_k)+w(s^n_{k-1})-w(s^n_k)|,\gamma(s^n_{k-1})) \d x\\
	\le & \frac 12 \int_\Omega |\nabla u(s^n_k)+\nabla (w(s^n_{k-1})-w(s^n_k))|^2\d x+ \int_\Omega \Phi(x,u(s^n_k),\gamma(s^n_{k})) \d x,
\end{align*}
where in the last inequality we used \ref{hyp:phi5} and \ref{hyp:phi6}. By adding $\frac 12 \int_\Omega|\nabla u(s^n_{k})|^2\d x$ to both sides and by summing from $k=1$ to $k=n$ we deduce
\begin{align*}
	&\frac 12 \int_\Omega|\nabla u(t)|^2\d x+\int_\Omega\Phi(x,u(t),\gamma(t))\d x-\frac 12 \int_\Omega |\nabla u_0|^2\d x-\int_\Omega \Phi(x,u_0,\gamma_0) \d x\\
	\ge&\sum_{k=1}^n\int_{s^n_{k-1}}^{s^n_k}  \int_\Omega \nabla \dot  w(\tau)\cdot (\nabla u(s^n_k)+\nabla(w(\tau)-w(s^n_{k}))) \d x\d\tau.
\end{align*}
By exploiting (1)-(4) it is standard to show that last term above converges to $\int_0^t  \int_\Omega \nabla \dot w(\tau)\cdot \nabla u(\tau) \d x\d\tau$, and we conclude.
\end{proof}

%	\bigskip
%	
%	\noindent\textbf{Data availability statement.} The manuscript has no associated data.
%	
%	\bigskip
%	
%	\noindent\textbf{Conflict of interest statement.} The authors have no competing interests to declare that are relevant to the content of this article.

\bigskip

\noindent\textbf{Acknowledgements.}
The author is member of GNAMPA (INdAM) and acknowledges its support through the INdAM-GNAMPA project 2025 \lq\lq DISCOVERIES\rq\rq (CUP E5324001950001).

\end{document}